\algnewcommand{\Initialize}[1]{%
  \State \textbf{Initialize:}
  \State \hspace*{\algorithmicindent}\parbox[t]{0.8\linewidth}{\raggedright #1}
}
\newtheorem{thm}{Theorem}[]
\newtheorem*{thm*}{Theorem}
\newtheorem{m-thm}[thm]{Meta-Theorem}
\newtheorem*{m-thm*}{Meta-Theorem}
\newtheorem{lem}{Lemma}[]
\newtheorem{remark}{Remark}[]
\newenvironment{rem}{\begin{remark}\rm}{\end{remark}}
\newtheorem{prop}{Proposition}[]
\newtheorem*{prop*}{Proposition}
\newtheorem{Definition}{Definition}
\newenvironment{dfn}{\begin{Definition}\rm}{\end{Definition}}
\newtheorem{Corollary}[]{Corollary}
\newenvironment{cor}{\begin{Corollary}}{\end{Corollary}}
\newtheorem{Example}[]{Example}
\newenvironment{eg}{\begin{Example}\rm}{\end{Example}}
\newtheorem{algor}[thm]{Method}
\newtheorem{Condition}[thm]{Condition}
\newtheorem{assp}{Assumption}[]
\newtheorem{asspp}{Assumption}[]
\newcommand{\Hil}{\mathcal{H}}
\newcommand{\R}{\mathbb{R}}
\newcommand{\dd}{\,\mathrm{d}}
\newcommand{\ve}{\varepsilon}
\newcommand{\F}{\mathcal{F}}
\renewcommand{\phi}{\varphi}
\newcommand{\X}{\mathcal{X}}
\newcommand{\bm}[1]{{\mbox{\boldmath $#1$}}}
\DeclareMathOperator{\cv}{conv}
\DeclareMathOperator{\supp}{supp}
\newcommand{\lmid}{\,\middle|\,}
\newcommand{\E}[1]{\mathbb{E}\!\left[#1\right]}
\renewcommand{\P}[1]{\mathbb{P}\!\left(#1\right)}
\newcommand{\ord}[1]{\mathcal{O}\!\left(#1\right)}
\newcommand{\K}{\mathcal{K}}
\newcommand{\vertiii}[1]{{\left\vert\kern-0.25ex\left\vert\kern-0.25ex\left\vert #1 
    \right\vert\kern-0.25ex\right\vert\kern-0.25ex\right\vert}}
\renewcommand{\tilde}{\widetilde}
\DeclareMathOperator{\wce}{wce}
\newcommand{\p}{\mathbb{P}}
\newcommand{\tr}{\mathop\mathrm{tr}}
\title{Hypercontractivity Meets Random Convex Hulls:\\
Analysis of Randomized Multivariate Cubatures}
\author{%
    Satoshi Hayakawa, \ Harald Oberhauser, \ 
    Terry Lyons\\
    Mathematical Institute,
    University of Oxford\\
    \texttt{\{hayakawa,oberhauser,tlyons\}@maths.ox.ac.uk}
}
\date{}
\begin{document}

\maketitle

\begin{abstract}
Given a probability measure $\mu$ on a set $\X$ and a vector-valued function $\bm \phi$, a common problem is to construct a discrete probability measure on $\X$ such that the push-forward of these two probability measures under $\bm\phi$ is the same.
This construction is at the heart of numerical integration methods that run under various names such as quadrature, cubature, or recombination. 
A natural approach is to sample points from $\mu$ until their convex hull of their image under $\bm \phi$ includes the mean of $\bm \phi$.
Here we analyze the computational complexity of this approach when $\bm\phi$ exhibits a graded structure by using so-called hypercontractivity.
The resulting theorem not only covers the classical cubature case of multivariate polynomials, but also integration on pathspace, as well as kernel quadrature for product measures.
\end{abstract}

\section{Introduction}


Let $X$ be a random variable that takes values in a set $\X$, and $\F \subset \R^\X$ a linear, finite dimensional space of integrable functions from $\X$ to $\R$. 
A cubature formula for $(X,\F)$ is a finite set of points $(x_i)\subset \X$ and weights $(w_i)\subset  \R$ such that
\begin{equation}
    \E{f(X)}
    = \sum_{i=1}^n w_if(x_i)\text{ for all }f\in \F.
    \label{eq:cubature}
\end{equation}
If the function class $\F$ is infinite-dimensional one can not hope for equality in \eqref{eq:cubature} and instead aims to find an approximation that holds uniformly over $\F$.
We also denote $\mu=\operatorname{Law}(X)$ and refer to $\hat \mu = \sum_{i=1}^n w_i \delta_{x_i}$ as the cubature measure for $(X,\F)$.
The existence of such a cubature formula that further satisfies $n \le 1 + \dim P$, $w_i\ge0$ and $\sum_{i=1}^n w_i=1$ is guaranteed by what is often referred to as Tchakaloff's theorem although a more accurate nomenclature would involve Wald, Richter, Rogosinski, and Rosenbloom \cite{wald1939limits,tch57,richter1957parameterfreie,rogosinski1958moments,rosenbloom1952quelques}; see \cite{Dio2018TheMT} for a historical perspective. 
Arguably the most famous applications concerns the case when $\X$ is a subset of $\R^d$ and $\F$ is the linear space of polynomials up to a certain degree, that is $\F$ is spanned by monomials up to a certain degree.
However, more recent applications include the case when $\X$ is a space of paths and $\F$ is spanned by iterated Ito--Stratonovich integrals \cite{lyo04}, or kernel quadrature \citep{kar19,hayakawa21b}
where $\X$ is a set that carries a positie definite kernel and $\F$ is a subset of the associated reproducing kernel Hilber space that is spanned by eigenfunctions of the integral operator induced by a kernel.

\paragraph{Convex Hulls.}
If $\F$ is spanned by $m$ functions $\phi_1,\ldots,\phi_m:\X \to \R$, then we can denote $\bm\phi = (\phi_1, \ldots, \phi_m) :\X\to\R^m$ and see that \eqref{eq:cubature} is equivalent to
\[\E{\bm\phi(X)} = \sum_{i=1}^n w_i\bm\phi(x_i).\]
If we restrict attention to  non-negative weights that sum up to one (equivalently, $\hat \mu$ is a probability measure) this is equivalent to that statement that
\begin{equation}
    \E{\bm\phi(X)}
    \in \cv\{\bm\phi(x_1), \ldots, \bm\phi(x_n)\},
    \label{eq:cv}
\end{equation}
where we denote for an $A\subset\R^m$ its convex hull as \[
\cv A =\left\{c_1a_1+\cdots+c_ka_k \lmid
k\ge1,\, a_i\in A,\, c_i\ge0,\, \sum_{i=1}^kc_i=1\right\}.
\]
\paragraph{Random Convex Hulls.}
A natural and general approach to find points $(x_i) \subset \X$ for which \eqref{eq:cv} holds was recently proposed in \citep{hayakawa-MCCC}: draw $N\gg n$ independent random samples $(X_j)_{j=1}^N$ from $\mu$ and subsequently try to select a subset of $n$ points $(x_i)$. 
The success of this approach amounts the event that 
\begin{equation}
    \E{\bm\phi(X)}
    \in \cv\{\bm\phi(X_1), \ldots, \bm\phi(X_N)\},
    \label{eq:rcv}
\end{equation}
since then simple linear programming (LP) allows select the subset of $x_i$'s resp.~compute the remaining weights that determine a cubature formula.
The following guarantees that for large enough $N$ this event occurs with high probability
\begin{prop}[\citep{hayakawa-MCCC}]\label{prop:cvx event}
    If $X_1, X_2, \ldots$ are independent copies of $X$,
    then the probability of the event \eqref{eq:rcv}
    tends to $1$ as $N\to\infty$.
\end{prop}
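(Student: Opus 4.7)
The plan is to reduce the claim to a geometric statement about i.i.d.\ samples from a Borel probability measure on $\R^m$, and then to exploit that the mean of such a measure sits in the relative interior of the convex hull of its support.

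First I would pass to the push-forward $\nu := \bm\phi_*\mu$: writing $Y_j := \bm\phi(X_j)$ and $\bar y := \E{\bm\phi(X)}$, the $(Y_j)$ are i.i.d.\ from $\nu$ with mean $\bar y$, and the event \eqref{eq:rcv} becomes $\bar y \in \cv\{Y_1,\ldots,Y_N\}$. This eliminates $\X$ and $\bm\phi$ entirely, so the remainder of the argument takes place in $\R^m$.

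The key geometric ingredient is that $\bar y$ lies in the relative interior of $\cv \supp\nu$ inside its affine hull $A := \aff \supp\nu$. I would prove this by contradiction via a supporting hyperplane: if $\bar y$ were on the relative boundary, there would be a linear functional $\ell$ on $A$ with $\ell \ge 0$ on $\supp\nu$ and $\ell(\bar y)=0$, forcing $\E{\ell(Y_1)}=0$ and hence $\ell(Y_1)=0$ almost surely, contradicting $A = \aff\supp\nu$. Given this, a Carath\'eodory-style selection provides affinely independent points $y_1,\ldots,y_{d+1} \in \supp\nu$ (with $d := \dim A$) whose simplex contains $\bar y$ in its relative interior.

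With that simplex fixed, I would choose $\epsilon > 0$ small enough that whenever $y_i' \in A$ satisfies $\|y_i' - y_i\| < \epsilon$ for each $i$, the perturbed simplex still contains $\bar y$; this is an open condition expressed through the continuity of barycentric coordinates on tuples of affinely independent vertices. Each ball $B_i := A \cap B(y_i,\epsilon)$ then has $\nu$-mass $p_i>0$ by definition of the support, so the probability that, for every $i \le d+1$, at least one $Y_j$ with $j \le N$ lies in $B_i$ is at least $1 - \sum_{i=1}^{d+1}(1-p_i)^N$, which tends to $1$. On that event a subselection of the $Y_j$'s produces $\bar y \in \cv\{Y_1,\ldots,Y_N\}$.

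The main obstacle I anticipate is the perturbation step: I need to ensure that small movements of the simplex vertices within $A$ preserve containment of $\bar y$ in the relative interior, and that the sampled $Y_j$'s remain in $A$ (automatic, since $\supp\nu \subset A$, but essential for the perturbed simplex to stay in the correct affine subspace). The convex-geometric step placing $\bar y$ in the relative interior is what rules out pathological boundary behavior; once it is in hand, the rest reduces to a routine law-of-large-numbers coverage bound on the positive-mass balls $B_i$.
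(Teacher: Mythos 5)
The paper does not actually prove Proposition~\ref{prop:cvx event}: it is quoted from the cited reference \citep{hayakawa-MCCC}, so there is no in-paper argument to compare against, and I assess your proposal on its own merits. Your overall plan (pass to the push-forward $\nu=\bm\phi_*\mu$, show $\bar y := \E{\bm\phi(X)}\in\ri\cv\supp\nu$ by a supporting-hyperplane argument, then cover finitely many support points with positive-$\nu$-mass balls and apply a coupon-collector bound) is sound, and the supporting-hyperplane step is argued correctly. However, the Carath\'eodory step contains a genuine error.

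You assert that $\bar y\in\ri\cv\supp\nu$ yields, via Carath\'eodory, $d+1$ affinely independent points $y_1,\dots,y_{d+1}\in\supp\nu$ with $\bar y$ in the relative interior of their simplex. This is false: take $\nu$ uniform on the four corners of the unit square, so $\supp\nu=\{0,1\}^2$, $d=2$, and $\bar y=(1/2,1/2)$ lies in the open square, yet every triangle on three of the four corners is half the square with a diagonal as hypotenuse, and $\bar y$ lies on both diagonals, hence on the boundary of every such triangle. Carath\'eodory only guarantees that $\bar y$ is \emph{some} convex combination of at most $d+1$ support points, not one with affinely independent vertices and strictly positive weights. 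The correct tool is Steinitz's theorem, which delivers $S'=\{z_1,\dots,z_k\}\subset\supp\nu$ with $k\le 2d$ and $\bar y\in\ri\cv S'$. With that substitution your argument goes through after a second small fix: since $k$ may exceed $d+1$, barycentric coordinates are no longer available, but containment of $\bar y$ in $\cv\{z_1',\dots,z_k'\}$ is still an open condition in $(z_1',\dots,z_k')\in A^k$ (by a compactness--separation argument; otherwise one could extract a unit normal $c$ with $c^\top(z_i-\bar y)\le 0$ for all $i$, contradicting $\bar y\in\ri\cv S'$). One then takes balls $B_i=A\cap B(z_i,\epsilon)$ of positive mass $p_i$, and the union bound $1-\sum_{i=1}^k(1-p_i)^N\to1$ finishes the proof.
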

Empirically, this approach turns out to work well already for ``reasonable'' magnitudes of $N$ \citep{hayakawa-MCCC,hayakawa21b}. 
The aim of this article is to fill this gap and provide theoretical guarantees for the number of samples $N$ for which this approach leads with high probability to a successful cubature construction; that is to provide a quantitative version of Proposition \ref{prop:cvx event} that applies to common cases.

\paragraph{Hypercontractivity.}
Our main tool is hypercontractivity.
This allows to prove the existence of a constant $C_m^\prime$ satisfying (mainly for $p=4$)
\[
    \E{\lvert f(X)\rvert^p} \le C_m^\prime \E{\lvert f(X)\rvert^2}^{p/2}
\]
uniformly for a large class of functions $f$, and where $X$ follows the product measure $\mu^{\otimes d}$.
While hypercontractivity is classically studied for Gaussian, discrete, and uniform probability measures on hypercubes or hyperspheres \citep{bon70,nel73, bec75,bec92}.
We generalize it to function classes that have a certain graded structure.


\paragraph{Contribution.}
Our main result is to provide an upper bound for the number of samples $N$ such that
an $N$-point i.i.d. sample of random vectors contains the expectation in its convex hull,
i.e. the event \eqref{eq:rcv} occurs, with a reasonable probability.
Although the connection between the bound for $N$ and the hypercontractivity of
the given random vector/function class has implicitly been proven
in a preceding study \citep{hayakawa21a} in the form of Theorem~\ref{hykw-moment},
generic conditions for having a good hypercontractivity constant
and why the magnitude of required $N$ becomes reasonably small
in practice have not been established or understood.

In this paper, we address these questions by
\begin{itemize}
    \item
        extending the hypercontractivity for the Wiener chaos
        to what we call generalized random polynomials (Section~\ref{sec:3}) and
    \item
        showing that this extension naturally applies to important examples
        in numerical analysis including classical cubature,
        cubature on Wiener space, and kernel quadrature (Section~\ref{sec:app}).
\end{itemize}
We explain the intuition behind these points
by introducing Theorem~\ref{thm:informal} and Example~\ref{eg-main}:
\begin{thm}[informal]\label{thm:informal}
    Let $\mu$ be a probability measure on $\X$.
    Suppose we have a ``natural'' function class
    \[\F = \bigoplus_{d\ge 1}\bigcup_{m \ge0} \F_{d,m},\]
    where $\F_{d,m} $ denotes a set of functions from $\X^d$ to $\R$ of ``degree'' up to $m$. 
    Then, under some integrability assumptions, there exists for every $m$ a constant $C_m = C_m(\mu, \F)>0$  such that the following holds:
    \begin{quote}
        Let $d$ and $D$ be two positive integers and $\bm\phi=(\bm\phi_1,\ldots,\bm \phi_D):\X^d\to\R^D$ with $\bm \phi_1,\ldots,\bm\phi_D \in \F_{d, m}$.
        Then, for all integers $N\ge C_mD$, we have
        \[
            \P{\E{\bm\phi(X)} \in 
            \cv\{\bm\phi(X_1), \ldots, \bm\phi(X_N)\}}
            \ge \frac12,
        \]
        where $X, X_1, \ldots, X_N$ are i.i.d.~samples from the product measure $\mu^{\otimes d}$ on $\X^d$.
    \end{quote}
\end{thm}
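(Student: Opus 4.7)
The plan is to reduce the convex-hull event \eqref{eq:rcv} to a moment inequality and then to establish that moment inequality by a hypercontractivity argument tailored to the graded decomposition of $\F$. Concretely, the preceding work (Theorem~\ref{hykw-moment}) already shows that if $\bm\phi:\X^d\to\R^D$ is centered (or after centering) and satisfies a uniform fourth-to-second moment bound of the form
\[
    \E{\lvert \langle v,\bm\phi(X)\rangle\rvert^4}\le C_m\,\E{\lvert\langle v,\bm\phi(X)\rangle\rvert^2}^2\quad\text{for all }v\in\R^D,
\]
then $N\ge C'_mD$ i.i.d.~samples suffice for the convex hull event to occur with probability at least $1/2$, for an absolute constant $C'_m$ depending only on $C_m$. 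So the task reduces to producing such a $C_m$ that is uniform over all $f$ lying in the linear span of $\F_{d,m}$, and crucially independent of both $d$ and $D$.

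The second step, which is the bulk of Section~\ref{sec:3}, is to prove this hypercontractivity for the generalized random polynomials. I would first isolate the one-dimensional statement: for each $m\ge0$ there is a constant $C_m(\mu,\F)$ such that every $f\in\F_{1,m}$ satisfies $\E{f(X)^4}\le C_m\E{f(X)^2}^2$ when $X\sim\mu$. Here $C_m$ should grow only with the ``degree'' $m$. Then, leveraging the direct-sum graded structure, I would decompose an arbitrary $f\in\F_{d,m}$ into components indexed by a multi-degree $\alpha\in\mathbb{Z}_{\ge0}^d$ with $|\alpha|\le m$, mimicking the Wiener chaos / Wick-product decomposition. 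The product structure of $\mu^{\otimes d}$ would make the components orthogonal in $L^2$, and a tensorization argument (in the spirit of Nelson's proof of the Gaussian hypercontractivity for products of $(2,4)$-hypercontractive operators) would promote the one-dimensional bound to
\[
    \E{f(X)^4}\le C_m^{\text{tens}}\,\E{f(X)^2}^2\quad\text{for all }f\in\F_{d,m},
\]
with $C_m^{\text{tens}}$ depending only on $m$ and the one-dimensional constants $C_0,\ldots,C_m$ but not on $d$. Applying this uniformly over $v\in\R^D$ to each linear combination $\langle v,\bm\phi\rangle$ (which lies in $\F_{d,m}$ by linearity) yields the required moment bound, and plugging it into Theorem~\ref{hykw-moment} completes the argument.

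The main obstacle I anticipate is the tensorization step. In the classical Gaussian/Boolean settings, hypercontractivity tensorizes because the underlying noise operator is a product of semigroups with a multiplicative spectral gap, and the $(2,4)$-contraction of each factor implies the $(2,4)$-contraction of the product. In the present ``generalized polynomial'' setting there is no a priori semigroup: we only have a graded linear class $\F_{d,m}$ and a base measure $\mu$. So one has to either (i) build a surrogate noise operator on $L^2(\mu)$ whose degree-$m$ eigenspace contains $\F_{1,m}$ and which is $(2,4)$-hypercontractive, or (ii) prove tensorization directly via an induction on $d$ using the multi-degree decomposition, carefully tracking cross-terms $\E{f_\alpha f_\beta f_\gamma f_\delta}$ whose multi-indices satisfy $\alpha+\beta=\gamma+\delta$. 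The routine but careful bookkeeping of these cross-terms, together with verifying that the examples in Section~\ref{sec:app} (classical multivariate polynomials, iterated integrals on Wiener space, and eigenfunction expansions for kernel quadrature) actually fit the abstract framework with a finite $C_m$, is where the real work lies; the final assembly into Theorem~\ref{thm:informal} is then immediate.
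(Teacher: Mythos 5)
Your proposal takes essentially the same route as the paper: reduce to a Khintchin-type $L^4/L^2$ (hence $L^3/L^2$) moment inequality and feed it into Theorem~\ref{hykw-moment}, establish the univariate moment bound, and tensorize it dimension-freely. The paper resolves your option-(i)-vs-(ii) dichotomy in favor of (i): the GRP formalism supplies the surrogate noise operator $T(G)$, Theorem~\ref{key-prop} tensorizes $(2,p;s)$-hypercontractivity across independent factors via Minkowski's integral inequality exactly as in the classical Gaussian proof, Proposition~\ref{prop:prod-hc} derives the single-factor hypercontractivity from a componentwise $L^4$ bound on each $H_m(Y)$, and Theorem~\ref{thm:grp-main} packages this (with a compactness argument on the unit sphere to get a finite univariate constant) into the $d$- and $D$-independent estimate you need.
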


\begin{eg}\label{eg-main}
Although the ``assumption'' in the above statement
is somewhat abstract, this applies to important examples as follows:
\begin{itemize}
    \item \textbf{Classical Cubature} \citep{str71}: $\mu$ is a probability measure with finite $m$ moments and $\F_{d,m}$ is the space of
    $d$-variate polynomials up to degree $m$ .
    \item \textbf{Cubature on Wiener space} \citep{lyo04}: $\mu$ is the Wiener measure and $\F_{d,m}$ is spanned by up to $m$-times iterated Ito--Stratonovich integrals.
    \item \textbf{Kernel quadrature} \citep{kar19,hayakawa21b}: $\mu$ is a probability measure on set $\X$ that carries a positive definite kernel $k$ and $\F_{d, m}$ is spanned by the eigenfunctions
    (down to some eigenvalue)
    of the integral operator
    $g\mapsto \int k^{\otimes d}(\cdot, x)g(x)
    \dd\mu^{\otimes d}(x)$,
    where $k^{\otimes d}$ is a tensor product kernel.
\end{itemize}
\end{eg}

\paragraph{Related work.}
If the measure $\mu$ has finite support,
the problem \eqref{eq:cubature} is also known as recombination. 
While in this case, the existence follows immediately from Caratheodory's theorem, the design of efficient algorithms to compute the cubature measure is more recent; we mention efficient deterministic algorithms
\citep{lit12,tch15,maa19}
and randomized speedups \citep{cos20}. 
For non-discrete measures, the majority of the cubature constructions are typically limited to algebraic approaches that cannot apply to general situations.
Related to our convex hull approach but different, is a line of research
aiming at constructing general cubature formulas
with positive weights
by using least-squares instead of the random convex hull approach
\citep{gla21,mig22}.
As their theory is on the positivity of the resulting cubature formula
given by solving a certain least squares problem,
requires more (or efficiently selected)
points than that needed for simply obtaining a positively weighted cubature.

Hypercontractivity is the key technical tool for our estimates and its use seems to be novel in the context of cubature resp.~random convex hull problems.
Somewhat related to the special case of kernel quadrature, \citep{mei21} proves a generalization error bound for kernel ridge regression with random features, however hypercontractivity is simply adopted as a technical assumption.
Further, for low-degree polynomials of a sequence of random variables,
\citet{kim00,sch12} give similar estimates on their higher order moments, but they mainly estimate the concentration of the moments,
and do not generally analyze the curtosis-type values appearing
in the hypercontractivity.

\paragraph{Outline.}
In Section \ref{sec:2},
we give briefly explain recent
results on random convex hulls,
and give some assertions that additionally follow from them.
In Section \ref{sec:3},
we introduce the Gaussian hypercontractivity and
show its generalization that is suitable for multivariate cubatures.
Section \ref{sec:app} gives some applications of Gaussian/generalized hypercontractibity
to random convex hulls with product structure,
including cubature on Wiener space and kernel quadrature.
Finally, we conclude the paper in Section \ref{sec:con}.
All the omitted proofs are given in Appendix \ref{sec:proofs}.

\section{Random Convex Hulls}\label{sec:2}
Our main interest is the probability of the even \eqref{eq:rcv} but it turns out to be more convenient to study a more general problem. 
Therefore we define
\begin{Definition}
Let $X$ be a $D$-dimensional random vector and $X_1, X_2, \ldots$ be iid copies of $X$.
For every integer $N > 0$ and $\theta\in\R^D$ define
\[
    p_{N, X}(\theta)\coloneqq\P{\theta \in \cv\{X_1, \ldots, X_N\}} \text{ and }
    N_X(\theta)\coloneqq\inf\{N\mid p_{N, X}(\theta)\ge1/2\}.
\]
\end{Definition}

Both of these quantities are classically studied for symmetric $X$
by \citet{wen62}, but more recently sharp inequalities for general $X$
\citep{wag01,hayakawa21a} as well
and calculations on the Gaussian case \citep{kab20} have been established.
Using this notation, our main interests is the choice $\theta =\E{\bm\phi(X)}$.
In the following two paragraphs we briefly discuss how bounds on $N_X(\theta)$ can be derived with previous approaches; in Section \ref{sec:3} we then discuss the approach via hypercontractivity.
\paragraph{Bounds via Tukey Depth.}
It turns out that a classical quantity from statistics, the so-called Tukey depth \citep{tuk75,rou99}, is closely related to the two quantities.
\begin{Definition}
The  Tukey depth of $\theta\in\R^D$ with respect to the distribution of $X$ is defined as
\begin{equation}
    \alpha_X(\theta) \coloneqq
    \inf_{c\in \R^D\setminus\{0\}}
    \P{c^\top(X - \theta) \le 0}.
    \label{eq:Tukey-depth}
\end{equation}
\end{Definition}
The relation between the above quantities is
\begin{thm}[\citep{hayakawa21a}]\label{hykw-tukey}
    Let $\theta\in\R^D$
    and $X$ be an arbitrary $D$-dimensional
    random vector.
    Then,
    we have
    \[
        \frac1{2\alpha_X(\theta)}
        \le N_X(\theta) \le \left\lceil 
        \frac{3D}{\alpha_X(\theta)}\right\rceil.
    \]
\end{thm}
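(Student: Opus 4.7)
The plan is to handle the two inequalities separately, since they call for quite different arguments.

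\emph{Lower bound.} The strategy is to observe that $\theta\in\cv\{X_1,\ldots,X_N\}$ forces, for every direction $c\in\R^D\setminus\{0\}$, at least one sample to satisfy $c^\top(X_i-\theta)\le 0$; otherwise $c$ would strictly separate $\theta$ from the sample. Fix a direction $c_\varepsilon$ that almost attains the infimum defining $\alpha_X(\theta)$, i.e.\ $\P{c_\varepsilon^\top(X-\theta)\le 0} < \alpha_X(\theta)+\varepsilon$, and union-bound over $i$ to get
\[
 p_{N,X}(\theta)\ \le\ \P{\exists\, i:\ c_\varepsilon^\top(X_i-\theta)\le 0}\ \le\ N(\alpha_X(\theta)+\varepsilon).
\]
Requiring this to be at least $1/2$ and sending $\varepsilon\downarrow 0$ yields $N_X(\theta)\ge 1/(2\alpha_X(\theta))$.

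\emph{Upper bound.} Set $\alpha\coloneqq\alpha_X(\theta)$ and aim to show $p_{N,X}(\theta)\ge 1/2$ when $N=\lceil 3D/\alpha\rceil$. The geometric input is LP duality: $\theta\notin\cv\{X_1,\ldots,X_N\}$ iff there exists $c\ne 0$ with $c^\top(X_i-\theta)>0$ for every $i$. The set of such witnessing $c$ forms an open spherical cone whose vertices (in general position) are unit vectors orthogonal to some $D-1$ of the vectors $X_i-\theta$. I would bound the bad event by a first-moment count of those vertices: for each size-$(D-1)$ subset $S$, the two candidate directions $\pm c_S$ are measurable with respect to $(X_i)_{i\in S}$, so by independence and the Tukey-depth bound applied to the remaining $N-D+1$ samples,
\[
 1-p_{N,X}(\theta)\ \le\ 2\binom{N}{D-1}(1-\alpha)^{N-D+1}.
\]

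The main obstacle is squeezing the clean constant $3$ out of this estimate. A crude substitution $\binom{N}{D-1}\le (eN/(D-1))^{D-1}$ would introduce a spurious $\log(1/\alpha)$ factor and only give $N=O((D/\alpha)\log(1/\alpha))$. The plan to avoid this is to pivot through the hitting time $N_0(\theta)\coloneqq\inf\{N:\theta\in\cv\{X_1,\ldots,X_N\}\}$, write $\E{N_0(\theta)}=\sum_{N\ge 0}(1-p_{N,X}(\theta))$, sum the geometric-type expression above exactly over $N$, and then apply Markov's inequality so that $N_X(\theta)\le 2\E{N_0(\theta)}$. Carefully tracking constants through this last step—and not losing a logarithmic factor—is the delicate point I expect to be the true crux.
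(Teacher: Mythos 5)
This theorem is cited from \citep{hayakawa21a}; the present paper does not prove it, so there is no in-paper argument to compare against. Your lower bound is correct: choosing a direction $c_\varepsilon$ that nearly attains the Tukey-depth infimum and union-bounding over the $N$ samples is exactly the standard argument, and the limit $\varepsilon\downarrow0$ is handled correctly.

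The upper bound, however, has a genuine gap, and it is not the ``delicate constant-tracking'' you flag at the end. The facet-counting estimate
\[
1-p_{N,X}(\theta)\ \le\ 2\binom{N}{D-1}(1-\alpha)^{N-D+1}
\]
is, as you note, already too weak on its own (optimizing $N$ gives $N=O((D/\alpha)\log(1/\alpha))$). But the proposed repair --- summing over $N$ to bound $\E{N_0(\theta)}$ and then invoking Markov --- makes matters dramatically worse, not better. Substituting $j=N-D+1$ and using the negative binomial series,
\[
\sum_{N\ge D-1}\binom{N}{D-1}(1-\alpha)^{N-D+1}=\sum_{j\ge 0}\binom{j+D-1}{D-1}(1-\alpha)^j=\alpha^{-D},
\]
so your route yields $\E{N_0(\theta)}\le (D-1)+2\alpha^{-D}$ and hence $N_X(\theta)=O(\alpha^{-D})$, exponential in $D$ rather than linear. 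The issue is not a logarithm or a constant: the union bound over $\binom{N}{D-1}$ candidate facet directions massively overcounts (only one subset is ever the actual witness), and this polynomial-in-$N$ prefactor overwhelms the geometric decay precisely in the regime $N\asymp D/\alpha$ where you need the bound. The pivot to $\E{N_0}$ plus Markov is a legitimate final step, but it requires an input bound on $1-p_{N,X}(\theta)$ (or directly on $\E{N_0(\theta)}$) that is already of quality $O(D/\alpha)$, which the per-facet first moment is not. Obtaining the clean $\lceil 3D/\alpha\rceil$ requires a structurally different mechanism --- e.g.\ a comparison controlling the failure probability by a binomial tail in $D$ directly, or an argument about the empirical Tukey depth --- rather than tighter bookkeeping around the subset union bound.
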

The above can be used to provide a novel bound on $N_X(\E{X})$ for a general class of distributions called {\it log-concave},
\begin{prop}
    If $X$ is a 
    $D$-dimensional log-concave random vector,
    we have $N_X(\E{X})
    \le \lceil 3eD \rceil$.
\end{prop}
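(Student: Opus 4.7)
The plan is to combine the upper bound from Theorem~\ref{hykw-tukey} with the classical Gr\"unbaum-type inequality, which says that every half-space through the mean of a log-concave measure has mass at least $1/e$. Concretely, Theorem~\ref{hykw-tukey} gives
\[
N_X(\E{X}) \le \left\lceil \frac{3D}{\alpha_X(\E{X})} \right\rceil,
\]
so the whole statement reduces to proving the quantitative Tukey-depth lower bound
\[
\alpha_X(\E{X}) \;=\; \inf_{c \in \R^D \setminus \{0\}} \P{c^\top(X - \E{X}) \le 0} \;\ge\; \frac{1}{e}.
\]

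To handle this infimum, I would reduce to one dimension by projection. For any nonzero $c \in \R^D$, set $Y \coloneqq c^\top X$. Since $X$ is log-concave, Pr\'ekopa's theorem (linear images of log-concave measures are log-concave) implies that $Y$ is a real-valued log-concave random variable, with mean $\E{Y} = c^\top \E{X}$. The event $\{c^\top(X - \E{X}) \le 0\}$ is exactly $\{Y \le \E{Y}\}$, so the whole problem reduces to the one-dimensional statement: \emph{every log-concave random variable $Y$ on $\R$ with finite mean satisfies $\P{Y \le \E{Y}} \ge 1/e$}.

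For this one-dimensional claim, I would invoke (and sketch a proof of) the classical Gr\"unbaum inequality for log-concave densities. Translating so that $\E{Y} = 0$, let $f$ denote the log-concave density and $p \coloneqq \P{Y \le 0}$. A standard extremal argument replaces $f$ on $(-\infty,0]$ and $(0,\infty)$ by two matching exponential pieces that preserve log-concavity, the mass $p$ and $1-p$ on each side, and the mean constraint; one then checks by an explicit calculation on this piecewise-exponential extremizer that $p \ge 1/e$, with equality only in the degenerate exponential case. Plugging $\alpha_X(\E{X}) \ge 1/e$ into the Tukey-depth bound yields $N_X(\E{X}) \le \lceil 3eD \rceil$.

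The main obstacle here is not the argument's structure, which is a short chain of reductions, but the one-dimensional Gr\"unbaum inequality, which is where all the log-concavity is actually consumed; since it is a well-known classical fact, I would most likely just cite it rather than reproduce the extremal computation in detail.
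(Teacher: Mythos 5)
Your proof is correct and follows essentially the same route as the paper: combine the upper bound $N_X(\E{X}) \le \lceil 3D/\alpha_X(\E{X})\rceil$ from Theorem~\ref{hykw-tukey} with the Gr\"unbaum-type lower bound $\alpha_X(\E{X}) \ge 1/e$ for log-concave distributions. The only difference is that the paper simply cites this depth bound (from \citet{cap91}, tracing back to \citet{gru60} in the convex-body case) rather than sketching the one-dimensional Pr\'ekopa-plus-extremal-exponential argument you outline.
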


\paragraph{Bounds via Moments.}
Theorem \ref{hykw-tukey} gives
a good intuition behind the random convex hulls,
but computing the Tukey depth $\alpha_X(\theta)$ itself is in general a difficult task \citep{cue08,zuo19}.
In \citep{hayakawa21a} an alternative way to bound $N_X(\theta)$ is provided by using the Berry--Esseen theorem \citep{ber41,ess42,kor12}.
\begin{thm}[\citep{hayakawa21a}]\label{hykw-moment}
    Let $X$ be an arbitrary $D$-dimensional
    random vector
    with $\E{\lVert X \rVert^3} < \infty$.
    If a constant $K>0$ satisfies
    $   \lVert c^\top (X - \E{X}) \rVert_{L^3}
        \le K \lVert c^\top (X - \E{X}) \rVert_{L^2}$
    for all $c\in \R^D$,
    then we have \[N_X(\E{X}) \le 17(1 + 9K^6/4)D.\]
\end{thm}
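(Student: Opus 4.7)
The plan is to combine the Tukey-depth upper bound $N_X(\theta) \le \lceil 3D/\alpha_X(\theta)\rceil$ of Theorem \ref{hykw-tukey} with a moment-based lower bound on the Tukey depth at $\theta = \E{X}$. Concretely, it suffices to show that $\P{c^\top(X - \E{X}) \le 0} \ge \mathrm{const}/K^6$ uniformly for $c \in \R^D\setminus\{0\}$, and then substitute into the Tukey bound.

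The proof therefore reduces to the following one-dimensional question: given a real random variable $Y$ with $\E{Y} = 0$ and $\|Y\|_{L^3} \le K\|Y\|_{L^2}$, lower bound $\P{Y \le 0}$ by an absolute function of $K$. I would set $Y = c^\top(X - \E{X})$ and run a short Paley--Zygmund-type chain. First, Holder's inequality applied to $Y^2 = |Y|^{1/2}\cdot|Y|^{3/2}$ with the conjugate pair $(2,2)$ gives $\|Y\|_{L^2}^2 \le \|Y\|_{L^1}^{1/2}\|Y\|_{L^3}^{3/2}$, hence $\E{|Y|} \ge \|Y\|_{L^2}^4/\|Y\|_{L^3}^3$. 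Second, since $\E{Y}=0$ one has $\E{Y^-} = \E{Y^+} = \tfrac12\E{|Y|}$. Third, Cauchy--Schwarz gives $(\E{Y^-})^2 \le \E{(Y^-)^2}\,\P{Y<0} \le \|Y\|_{L^2}^2\,\P{Y \le 0}$. Chaining these three estimates yields $\P{Y \le 0} \ge \|Y\|_{L^2}^6/(4\|Y\|_{L^3}^6) \ge 1/(4K^6)$, and this bound is homogeneous in $c$, so it is uniform in the direction.

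Taking the infimum over $c$ gives $\alpha_X(\E{X}) \ge 1/(4K^6)$, after which Theorem \ref{hykw-tukey} immediately yields $N_X(\E{X}) = O(K^6 D)$; the stated constants $17$ and $9/4$ come out of the slightly looser packaging that absorbs the ceiling into a single clean multiplicative expression and uses $K \ge 1$ (which holds automatically by Jensen).

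The main obstacle is the one-variable lower bound on $\P{Y \le 0}$: a naive second-moment tool like Cantelli's inequality is too weak at the origin (it only yields $\P{Y \le 0} \ge 0$), so the third-moment hypothesis really has to be invoked through the right Holder interpolation. Once that exponent bookkeeping is done so that the ratio $\|Y\|_{L^3}/\|Y\|_{L^2} = K$ cleanly controls $\P{Y \le 0}$ with power $K^6$, the remainder is a direct application of the already-established Tukey-depth bound.
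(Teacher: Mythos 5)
Your argument is correct, and it establishes the claim by a genuinely different route than the one the paper relies on. The paper cites this theorem from \citep{hayakawa21a} and explicitly describes that proof as going via the Berry--Esseen theorem, presenting it as an \emph{alternative} to bounding $N_X$ through the Tukey depth. You instead stay inside the Tukey-depth framework: you lower-bound $\alpha_X(\E{X})$ by a one-dimensional Paley--Zygmund-type argument and then feed this into Theorem~\ref{hykw-tukey}. The three steps check out: the Lyapunov/H\"older interpolation $\|Y\|_{L^2}^2 \le \|Y\|_{L^1}^{1/2}\|Y\|_{L^3}^{3/2}$, the identity $\E{Y^-}=\tfrac12\E{|Y|}$ from $\E{Y}=0$, and Cauchy--Schwarz on $\E{Y^-}=\E{Y^-\mathbbm{1}_{\{Y<0\}}}$ chain together to give $\P{Y\le 0}\ge \|Y\|_{L^2}^6/(4\|Y\|_{L^3}^6)\ge 1/(4K^6)$, hence $\alpha_X(\E{X})\ge 1/(4K^6)$ and, by Theorem~\ref{hykw-tukey}, $N_X(\E{X})\le\lceil 12K^6D\rceil$.

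One small inaccuracy: you attribute the paper's constants $17$ and $9/4$ to ``looser packaging'' of your bound, but in fact $\lceil 12K^6D\rceil$ does not rearrange into $17(1+9K^6/4)D=17D+\tfrac{153}{4}K^6D$; those constants come from the Berry--Esseen route in \citep{hayakawa21a}, not from rounding your expression. What is true is that since $K\ge 1$ (Jensen) and $D\ge 1$, your bound $\lceil 12K^6D\rceil$ is strictly smaller than $17(1+9K^6/4)D$, so your argument proves the theorem a fortiori and with a cleaner constant. The Berry--Esseen route presumably offers finer information elsewhere (e.g.\ it quantifies the deficiency $1/2-p_{N,X}$ as a function of $N$ rather than only certifying $p_{N,X}\ge 1/2$ past a threshold), but for the statement as quoted your elementary route is both sufficient and tighter.
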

This result still recovers a sharp bound
$N_X(\E{X}) = \ord{D}$ up to constant
for a Gaussian,
where we have detailed information about
the marginals.
The sort of inequality assumed in the statement
is also called Khintchin's inequality
(see e.g., \citep{kon14,esk18})
and there are known values of $B$
for a certain class of $X$ such as
a Rademacher vector.
Indeed, we can easily show the following estimate
under a clear independence structure:
\begin{prop}\label{prop:indep-easy}
    Let $X = (X_1, \ldots, X_D)^\top$
    be a $D$-dimensional random vector
    whose coordinates are independent and identically distributed.
    If $\E{X_1} = 0$
    and $\lVert X_1\rVert_{L^4} \le K\lVert X_1\rVert_{L^2}$
    holds for a constant $K>0$,
    then we have $\lVert c^\top X\rVert_{L^4}
    \le K\lVert c^\top X\rVert_{L^2}$ for all $c\in \R^D$.
\end{prop}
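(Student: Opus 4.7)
The natural plan is to directly expand $\E{(c^\top X)^4}$ and $\E{(c^\top X)^2}$ via the multinomial theorem, exploiting the i.i.d.\ assumption and $\E{X_1}=0$ to kill most cross terms.  For the second moment, independence gives $\E{(c^\top X)^2}=\lVert X_1\rVert_{L^2}^2\sum_i c_i^2$ immediately.  For the fourth moment, writing $(c^\top X)^4=\sum_{i_1,\dots,i_4} c_{i_1}c_{i_2}c_{i_3}c_{i_4}X_{i_1}\cdots X_{i_4}$ and observing that $\E{X_{i_1}\cdots X_{i_4}}$ vanishes whenever some distinct index appears exactly once, only two types of multi-indices survive: all four indices coincide, or exactly two distinct index values each occur twice.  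A short count then yields
\[
    \E{(c^\top X)^4} = \lVert X_1\rVert_{L^4}^4\sum_i c_i^4 \;+\; 3\,\lVert X_1\rVert_{L^2}^4\sum_{i\ne j} c_i^2 c_j^2,
\]
the factor $3$ being the number of ways to partition four positions into two unordered pairs.

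Using the identity $\sum_{i\ne j} c_i^2 c_j^2=(\sum_i c_i^2)^2-\sum_i c_i^4$ together with $\E{(c^\top X)^2}^2=\lVert X_1\rVert_{L^2}^4(\sum_i c_i^2)^2$, the target inequality reduces to a purely algebraic comparison in the two symmetric functions $\sum_i c_i^4$ and $(\sum_i c_i^2)^2$ and in the two moments $\lVert X_1\rVert_{L^4}^4,\lVert X_1\rVert_{L^2}^4$.  The only analytic input beyond the hypothesis $\lVert X_1\rVert_{L^4}^4\le K^4\,\lVert X_1\rVert_{L^2}^4$ is the power-mean inequality $\sum_i c_i^4\le(\sum_i c_i^2)^2$, which collapses the two symmetric functions onto a single scale.

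The main bookkeeping obstacle is the sign of the coefficient $\lVert X_1\rVert_{L^4}^4-3\,\lVert X_1\rVert_{L^2}^4$ of $\sum_i c_i^4$ after rearrangement.  In the high-kurtosis case this coefficient is nonnegative, and combining $\sum_i c_i^4\le(\sum_i c_i^2)^2$ with the hypothesis directly closes the bound at $\lVert X_1\rVert_{L^4}^4(\sum_i c_i^2)^2\le K^4\,\lVert X_1\rVert_{L^2}^4(\sum_i c_i^2)^2$.  In the low-kurtosis case the coefficient is nonpositive, so the term may simply be discarded, leaving the universal fourth-moment bound $3\,\lVert X_1\rVert_{L^2}^4(\sum_i c_i^2)^2$, from which the conclusion is read off together with the hypothesis.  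No probabilistic machinery beyond the fourth-moment expansion is needed.
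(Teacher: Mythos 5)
Your multinomial expansion of $\E{(c^\top X)^4}$ is the correct one: the surviving cross term carries a factor of $3$ on $\sum_{i\ne j}$ (equivalently $6$ on $\sum_{i<j}$). It is worth noting that the paper's own printed computation omits this factor — it writes the cross term as $\sum_{1\le i<j\le D}c_i^2c_j^2\E{X_i^2}\E{X_j^2}$ with coefficient $1$ — and it is only because of that omission that its final step closes using just the observation $K\ge1$. Restoring the factor, as you do, exposes the true arithmetic, and your case split is the honest way to organize it.

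Your high-kurtosis branch is correct. Setting $m_4=\E{X_1^4}$, $m_2=\E{X_1^2}$, $A=\sum_i c_i^4$, $B=\left(\sum_i c_i^2\right)^2$ (so $A\le B$), you have
\[
\E{(c^\top X)^4}=(m_4-3m_2^2)\,A+3m_2^2\,B,\qquad \E{(c^\top X)^2}^2=m_2^2\,B,
\]
and when $m_4\ge3m_2^2$, replacing $A$ by $B$ in the nonnegative coefficient yields $\E{(c^\top X)^4}\le m_4B\le K^4m_2^2B$. The low-kurtosis branch, however, does not close as you claim: discarding the nonpositive $A$-term leaves $\E{(c^\top X)^4}\le3m_2^2B=3\,\E{(c^\top X)^2}^2$, and to finish you would need $K^4\ge3$. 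The hypothesis $m_4\le K^4m_2^2$ gives that only when $m_4\ge3m_2^2$, i.e.\ only in the \emph{other} branch, so ``read off together with the hypothesis'' does not go through. This is not a repairable bookkeeping detail: the proposition as stated is false. With $X_1$ Rademacher, $K=1$ satisfies the hypothesis, but for $c=(1,1,0,\ldots,0)$ one has $\lVert c^\top X\rVert_{L^4}/\lVert c^\top X\rVert_{L^2}=2^{1/4}>1$. What your case analysis actually proves is the correct statement $\lVert c^\top X\rVert_{L^4}\le\max\bigl(K,3^{1/4}\bigr)\lVert c^\top X\rVert_{L^2}$; the paper's argument works only under the tacit extra assumption $K\ge3^{1/4}$, which its dropped combinatorial factor conceals.
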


\section{Hypercontractivity}\label{sec:3}
The previous section provides bounds on $N_X$ but the assumptions--log-concavity or coordinate-wise independence--are too strong for many applications. 
We now develop an approach an appraoch via hypercontractivity; this results in bounds that apply under much less strict assumptions.
\paragraph{Hypercontractivity: the Gaussian case.}
It is instructive to briefly review the classical results for Gaussian measures by following \citet{GH-book} since we need several generalizations of this.

\begin{thm}[Wiener Chaos Decomposition]
Let $H$ be a Gaussian Hilbert space\footnote{A Gaussian Hilbert space is a closed linear subspace
of $L^2(\Omega, \mathcal{G}, \mathbb{P})$ whose elements all follow
Gaussian distributions.} and let $\sigma(H)$ be the $\sigma$-algebra generated by $H$. 
Then
    \[
        L^2(\Omega, \sigma(H), \mathbb{P})
        = \bigoplus_{n=0}^\infty H^{(n)},
    \]
where $H^{(n)}:= \overline{P_n(H)}
\cap P_{n-1}(H)^\perp$ with 
\[
    P_n(H)\coloneqq\{
        f(Y_1, \ldots, Y_m)
        \mid
        \text{$f$ is a polynomial of degree $\le m$},
        \ 
        Y_1,\ldots,Y_m\in H,
        \
        m < \infty
    \}
\]
with $P_{-1}(H) \coloneqq \{0\}$ and $\overline{P_n(H)}$ denotes the completion
in $L^2(\Omega, \F, \mathbb{P})$.
    \end{thm}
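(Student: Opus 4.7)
The plan is to split the theorem into two logically independent steps: checking that the sum on the right-hand side is a well-defined orthogonal direct sum, and checking that its Hilbert closure exhausts $L^2(\Omega,\sigma(H),\mathbb{P})$.

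The orthogonal direct sum structure I would establish by induction on $n$, showing $\overline{P_n(H)}=\bigoplus_{k=0}^n H^{(k)}$. The base case is the definition $H^{(0)}=\overline{P_0(H)}$. For the inductive step, apply the standard Hilbert space decomposition $\overline{P_n(H)}=\overline{P_{n-1}(H)}\oplus\bigl(\overline{P_n(H)}\cap P_{n-1}(H)^\perp\bigr)$, using the basic fact that $P_{n-1}(H)^\perp=\overline{P_{n-1}(H)}^\perp$. Pairwise orthogonality of the $H^{(n)}$ then follows immediately, since for $m<n$ we have $H^{(m)}\subset\overline{P_{n-1}(H)}$, which is perpendicular to $H^{(n)}$ by construction.

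Completeness then amounts to showing that $\bigcup_n P_n(H)$ is dense in $L^2(\Omega,\sigma(H),\mathbb{P})$. I would first reduce to the finite-dimensional case by a monotone class argument: every element of $L^2(\Omega,\sigma(H),\mathbb{P})$ is approximable in $L^2$ by bounded Borel functions of finitely many elements $Y_1,\ldots,Y_k\in H$. Since $(Y_1,\ldots,Y_k)$ is jointly Gaussian with some law $\gamma$ on $\mathbb{R}^k$, the remaining task is to prove that polynomials are dense in $L^2(\mathbb{R}^k,\gamma)$.

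This last step is the main obstacle. I would argue by duality: suppose $g\in L^2(\mathbb{R}^k,\gamma)$ is orthogonal to every polynomial, and show $g\equiv 0$. Set $F(z):=\int_{\mathbb{R}^k} g(x)\,e^{\langle z,x\rangle}\,d\gamma(x)$ for $z\in\mathbb{C}^k$. Cauchy--Schwarz together with the Gaussian decay of $\gamma$ shows that $F$ is entire. Expanding $e^{\langle z,x\rangle}$ as a power series and exchanging sum and integral (justified by a standard Gaussian moment bound), every Taylor coefficient of $F$ vanishes by the orthogonality hypothesis, hence $F\equiv 0$. Specializing to purely imaginary $z$ shows that the Fourier transform of the signed measure $g\,d\gamma$ vanishes, so $g=0$ almost surely. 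All of the real analytic content sits in this moment-determinacy step; the rest is routine Hilbert-space bookkeeping.
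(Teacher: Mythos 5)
The paper does not prove this theorem---it is stated without proof as a classical result cited from Janson's \emph{Gaussian Hilbert Spaces}, which the authors only review before introducing their GRP generalization. Your argument is the standard textbook proof and is correct: the Hilbert-space decomposition $\overline{P_n(H)} = \overline{P_{n-1}(H)}\oplus H^{(n)}$ is routine bookkeeping, the monotone-class reduction to finitely many Gaussian variables is the right way to exploit the structure of $\sigma(H)$, and the entire-function / moment-determinacy argument is exactly where the genuine analytic content lives. One small remark worth making explicit: when reducing to the finite-dimensional case, it is cleanest to observe that $L^2(\Omega,\sigma(H),\mathbb{P})$ is the closure of the increasing union of the spaces $L^2(\Omega,\sigma(Y_1,\ldots,Y_k),\mathbb{P})$ over finite tuples $(Y_1,\ldots,Y_k)\subset H$ (e.g.\ by $L^2$ martingale convergence of conditional expectations), which hands you a single finite-dimensional Gaussian law $\gamma$ on $\mathbb{R}^k$ exactly as your duality step requires.
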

Hence, for each
$X\in L^2(\Omega, \sigma(H), \mathbb{P})$,
we have a unique decomposition
$X = \sum_{n=0}^\infty X_n$ such that $X_n\in H^{(n)}$.

\begin{thm}[Hypercontractivity, \cite{GH-book}, Theorem 5.8]\label{thm:hc}For $r \in [0,1]$ denote
\[
    T_r:L^2(\Omega, \sigma(H), \mathbb{P}) \to L^2(\Omega, \sigma(H), \mathbb{P}),\quad X \mapsto \sum_{n=0}^\infty r^n X_n.
\]
    If $p>2$ and $0 < r \le (p-1)^{-1/2}$,
    then we have
    \[
        \lVert T_r(X)\rVert_{L^p} \le \lVert X \rVert_{L^2}.
    \]
\end{thm}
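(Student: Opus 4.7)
The plan is to follow the classical strategy of proving hypercontractivity by reducing the Gaussian statement to a single two-point inequality and then extending via tensorization and a central limit type passage to the limit. I expect the proof to proceed in four stages.

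First, I would establish the two-point (Bonami) inequality: for a Rademacher variable $\varepsilon$ with $\mathbb{P}(\varepsilon = \pm 1) = 1/2$, real numbers $a, b$, and $p \ge 2$ with $r \le (p-1)^{-1/2}$,
\[
\mathbb{E}\bigl[|a + r b \varepsilon|^p\bigr]^{1/p} \le \mathbb{E}\bigl[|a + b \varepsilon|^2\bigr]^{1/2}.
\]
This is the base case and is purely a one-variable calculus problem: after normalizing one of the variables, it reduces to verifying a single inequality in a real parameter, which can be checked by expanding both sides around $b/a = 0$ and monitoring derivatives, or equivalently by showing convexity of an auxiliary function.

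Second, I would tensorize. For independent Rademachers $\varepsilon_1, \dots, \varepsilon_n$, every function in $L^2(\{\pm 1\}^n)$ has a Walsh expansion $f = \sum_S a_S \prod_{i \in S} \varepsilon_i$, and the operator $T_r^{\mathrm{R}} f = \sum_S r^{|S|} a_S \prod_{i \in S} \varepsilon_i$ satisfies $\|T_r^{\mathrm R} f\|_{L^p} \le \|f\|_{L^2}$. The proof is by induction on $n$: conditioning on $\varepsilon_2, \dots, \varepsilon_n$ reduces one coordinate to the two-point case, and combining with Minkowski's integral inequality (using $p \ge 2$) to interchange the outer $L^p$ norm with the inner one yields the step.

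Third, I would transfer to the Gaussian setting via a central limit argument. Each element of $H$ can be approximated in law by a normalized sum of Rademachers, and hence any polynomial $P(Y_1, \ldots, Y_k)$ in jointly Gaussian variables is the limit in $L^p$ (for every finite $p$) of a suitable polynomial in Rademachers of the same degree. The $n$-th chaos component of the Gaussian polynomial corresponds to the degree-$n$ part of the Walsh expansion, so $T_r$ on the Gaussian side is the pointwise limit of $T_r^{\mathrm R}$ on the Rademacher side. Applying the tensorized inequality and passing to the limit then gives $\|T_r X\|_{L^p} \le \|X\|_{L^2}$ for $X$ a polynomial in $H$.

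Finally, I would extend from polynomials to all of $L^2(\Omega, \sigma(H), \mathbb{P})$ by density. Since $\bigcup_n P_n(H)$ is dense in $L^2(\Omega, \sigma(H), \mathbb{P})$ by the Wiener chaos decomposition, and $T_r$ is an $L^2$-contraction (each component $r^n X_n$ has norm $\le \|X_n\|_{L^2}$, and these are orthogonal), a standard approximation argument promotes the inequality from polynomials to arbitrary $X$. The main obstacle I anticipate is the third step: carefully identifying the $n$-th Wiener chaos with the limit of the degree-$n$ Walsh component and verifying the necessary $L^p$ convergence (uniform integrability of the polynomial approximants), rather than the algebra in steps one, two, and four.
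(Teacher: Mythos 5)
The paper does not supply a proof of this theorem; it is quoted from \citet{GH-book} (Theorem~5.8) as a classical background result, so there is no in-paper argument to compare against. Your sketch is the standard Bonami--Nelson derivation --- two-point inequality, Walsh tensorization via Minkowski's integral inequality, central-limit transfer to Gaussian chaos, then density --- and the architecture is correct. Two points you should spell out to close it. First, in the density step, $L^2$-convergence of $X_n \to X$ only gives $T_r X_n \to T_r X$ in $L^2$; to reach the $L^p$ conclusion pass to an almost surely convergent subsequence and apply Fatou's lemma, giving $\lVert T_r X\rVert_{L^p} \le \liminf_k \lVert T_r X_{n_k}\rVert_{L^p} \le \lVert X\rVert_{L^2}$. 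Second, in the central-limit step the transfer from Rademacher to Gaussian needs $L^p$ convergence of the fixed-degree polynomial approximants, not merely convergence in distribution; the required uniform integrability is itself supplied by the discrete hypercontractivity established in your second step (moments of all orders of degree-$\le n$ polynomials in $m$ normalized Rademachers are bounded uniformly in $m$), which you correctly flag as the delicate part.
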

From this, we have the following moment bound on
$\overline{P_n(H)}$,
which is also referred to as hypercontractivity, see for example \cite{nou10}.

\begin{thm}\label{thm:hc-moment}
    Let $n\ge0$ be an integer.
    For each $p > 2$,
    we have
    \[
        \lVert X\rVert_{L^p}
        \le (p-1)^{n/2}\lVert X \rVert_{L^2},
        \qquad X \in \overline{P_n(H)}.
    \]
\end{thm}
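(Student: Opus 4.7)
The plan is to derive Theorem \ref{thm:hc-moment} as a direct consequence of Theorem \ref{thm:hc} by inverting the semigroup $T_r$ on the finite-degree subspace. First, I would observe that, by iterating the definition $H^{(k)}=\overline{P_k(H)}\cap P_{k-1}(H)^\perp$, one obtains the orthogonal decomposition
\[
\overline{P_n(H)} \;=\; \bigoplus_{k=0}^{n} H^{(k)}.
\]
Hence any $X\in\overline{P_n(H)}$ admits a unique finite Wiener chaos expansion $X=\sum_{k=0}^{n}X_k$ with $X_k\in H^{(k)}$, and by orthogonality $\lVert X\rVert_{L^2}^2=\sum_{k=0}^{n}\lVert X_k\rVert_{L^2}^2$.

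Next, I would fix $r=(p-1)^{-1/2}\in(0,1]$ and introduce the auxiliary random variable
\[
Y \;\coloneqq\; \sum_{k=0}^{n} r^{-k}X_k,
\]
which still lies in $\overline{P_n(H)}\subset L^2(\Omega,\sigma(H),\mathbb{P})$. The point of this rescaling is that $T_r$ acts on each chaos $H^{(k)}$ by multiplication by $r^k$, so by linearity $T_r(Y)=\sum_{k=0}^{n}r^k r^{-k}X_k = X$. Applying Theorem \ref{thm:hc} to $Y$ then yields
\[
\lVert X\rVert_{L^p} \;=\; \lVert T_r(Y)\rVert_{L^p} \;\le\; \lVert Y\rVert_{L^2}.
\]

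Finally, I would bound $\lVert Y\rVert_{L^2}$ by the same orthogonality:
\[
\lVert Y\rVert_{L^2}^2 \;=\; \sum_{k=0}^{n} r^{-2k}\lVert X_k\rVert_{L^2}^2
\;\le\; r^{-2n}\sum_{k=0}^{n}\lVert X_k\rVert_{L^2}^2 \;=\; r^{-2n}\lVert X\rVert_{L^2}^2,
\]
where the inequality uses $r\le 1$ so that $r^{-2k}\le r^{-2n}$ for all $k\le n$. Substituting $r^{-1}=(p-1)^{1/2}$ gives $\lVert X\rVert_{L^p}\le(p-1)^{n/2}\lVert X\rVert_{L^2}$, as claimed. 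I do not anticipate a serious obstacle here: the only delicate point is choosing $Y$ so that $T_r(Y)=X$ (a ``preimage'' under the contraction) rather than applying $T_r$ directly to $X$; once this inversion is in place the bound follows from Gaussian orthogonality and the monotonicity $r^{-2k}\le r^{-2n}$.
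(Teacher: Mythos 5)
Your proposal is correct and follows essentially the same route as the paper's own proof: decompose $X$ into its chaos components, rescale to produce a preimage $Y$ with $T_r(Y)=X$, apply Theorem~\ref{thm:hc}, and then use orthogonality together with $r^{-2k}\le r^{-2n}$ before setting $r=(p-1)^{-1/2}$.
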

\begin{proof}
    Let $X = \sum_{m=0}^n X_m$ with $X_m\in H^{(m)}$.
    For $0 < r\le (p-1)^{-1/2}$,
    by Theorem \ref{thm:hc}, we have
    \[
        \lVert X\rVert_{L^p}^2
        = \left\lVert T_r\left(\sum_{m=0}^n r^{-m}X_m
        \right)\right\rVert_{L^p}^2
        \le
        \left\lVert \sum_{m=0}^n r^{-m}X_m
        \right\rVert_{L^2}^2
        =\sum_{m=0}^n r^{-2m}\lVert X_m\rVert_{L^2}^2
        \le r^{-2n}\lVert X\rVert_{L^2}^2.
    \]
    We obtain the conclusion by letting $r = (p-1)^{-1/2}$.
\end{proof}
We included the proof since we are going to generalize it in the next section.

\subsection{Hypercontractivity for Generalized Random Polynomials}\label{sec:gen-hc}
The phenomenon of hypercontractivity is not limited to the Gaussian setting.
Indeed, the hypercontractivity of operators
on the space of boolean functions (i.e., $\{-1, 1\}^n \to \R$)
associated with the uniform measure was established even before the Gaussian case \citep{bon70,sim72}.
Our focus is to obtain estimates analogous
to Theorem \ref{thm:hc-moment} when a graded class of test function is given; we refer to such a class as generalized random polynomials.
\paragraph{Generalized Random Polynomials.}
\begin{dfn}\it
    Under a probability space $(\Omega, \mathcal{G}, \mathbb{P})$,
    a triplet
    $G = (Y, Q, \lambda)$
    is called GRP if it satisfies the following conditions:
    \begin{itemize}
        \item $Y$ is a random variable
        taking values in a topological space $\X$.
        \item $Q = (Q_m)_{m=0}^\infty$
        is a nondecreasing
        sequence of linear spaces of
        $L^2(\mathbb{P}_Y)$-integrable functions $\X\to\R$.
        Namely, if we let $Q_m(Y):=\{f(Y)\mid f\in Q_m\}$,
        then each $Q_m$ is a linear subspace of
        $L^2(\mathbb{P})$, with
        $Q_0\subset Q_1\subset\cdots
        \subset L^2(\mathbb{P})$.
        We additionally assume $Q_0$ is the set of constant functions.
        \item $\lambda=(\lambda_m)_{m=0}^\infty$
        satisfies
        $1 = \lambda_0 > \lambda_1 \ge \lambda_2 \ge \cdots \ge 0$.
    \end{itemize}
    If $G$ is a GRP, we also define
    $\tilde{\deg}_G X :=
        \inf\{1/\lambda_m
            \mid 
            m\ge 0,\,
            X \in \overline{Q_m(Y)}
    \}$.
\end{dfn}

Intuitively,
each $Q_m$ is a generalization of degree-$m$ polynomials
and $\tilde{\deg}_G$ indicates the ``degree'' of
such functions (though $Y$ plays a role in the latter).
In the setting of actual polynomials like
Wiener chaos,
we can define $\lambda_m = b^{-m}$ for a certain $b>1$,
and then we have $\deg X = \log_b \tilde{\deg}_G X$
for the usual degree of $X$
as a random polynomial.

\begin{dfn}\it
Let $G=(Y,Q,\lambda)$ be a GRP.
We define
\[H_m(Y) \coloneqq
\overline{Q_m(Y)} \cap Q_{m-1}(Y)^\perp\]
in terms of $L^2(\mathbb{P})$ where $Q_{-1}(Y) \coloneqq \{0\}$ and  \[H_\infty \coloneqq L^2(\Omega, \sigma(Y), \p)
\cap \bigl(\bigcup_{m=0}^M Q_m(Y)\bigr)^\perp.\]
We refer 
\[
    L^2(\Omega, \sigma(Y), \p)
    = \left(\bigoplus_{m=0}^\infty H_m(Y)\right)
    \oplus H_\infty(Y)
\]
as the orthogonal decomposition associated with $G$.
\end{dfn}

\begin{dfn}\it
Let $G=(Y,Q,\lambda)$ be a GRP.
The operator $T(G)$ is defined as
\[
   T(G): L^2(\Omega, \sigma(Y), \p) \to  L^2(\Omega, \sigma(Y), \p), \quad X  \mapsto \sum_{m=0}^\infty \lambda_m X_m,
\]
where $(X_m)_{m \in \mathbb{N} \cup \infty}$ with $X_m \in H_m(Y)$ is the orthogonal decomposition of $X$ associated with the GRP $G$.
    We say that a GRP $G = (Y, Q, \lambda)$ is {\it $(2, p; s)$-hypercontractive}
    if 
    \[
        \lVert T(G)^s X \rVert_{L^p}
        \le \lVert X\rVert_{L^2},
        \qquad X \in L^2(\Omega, \sigma(Y), \p).
    \]
\end{dfn}
Thus, \[T(G)^s X = \sum_{m=0}^\infty \lambda_m^s X_m\text{ for }s>0\] and if $G$ is $(2, p; s)$-hypercontractive,
it is $(2, p; t)$-hypercontractive for all $t\ge s$ as $T(G)^{t-s}$
is a contraction in $L^2$.
The formulation of $G$ associated with ``degree'' concept
given by $\lambda$ then naturally extends to the
multivariate case.

\begin{dfn}\it
We call a set of $d$ GRPs,
$G^{(i)} = (Y^{(i)}, Q^{(i)}, \lambda^{(i)})$ for $i=1, \ldots, d$ independent, if the $Y^{(i)}$'s are independent random variables taking values in $\X^{(i)}$'s.
For $d$ independent GRPs, their product is a GRP 
$G=(Y, Q, \lambda)$ that is defined as follows
\begin{itemize}
    \item $Y = (Y^{(1)},\ldots,Y^{(d)})
    \in \X^{(1)} \times \cdots \times \X^{(d)}$.
    \item
    $\lambda_m$
    is the $(m+1)$-th largest value in the set
    $\left\{ \prod_{i=1}^d
    \lambda^{(i)}_{m_i} \lmid
    \lambda^{(i)}_{m_i} \in \lambda^{(i)},\, 
    i=1,\ldots,d
    \right\}$.
    \item
    $Q_m = \mathop\mathrm{span}
    \left\{
        f : (x_1, \ldots, x_d)
        \mapsto
        \prod_{i=1}^d f_i(x_i)
        \lmid
        f_i \in Q^{(i)}_{m_i},
        \, \prod_{i=1}^d \lambda^{(i)}_{m_i}
        \le \lambda_m
    \right\}$.
\end{itemize}
As $Q_m(Y) \subset L^2$ it follows from independence for each $m$ that $G = (Y, Q, \lambda)$ is indeed a GRP.
We also denote it by
$G = G^{(1)}\otimes\cdots\otimes G^{(d)}$.
\end{dfn}
\begin{Example}
Consider the case when $Q_m^{(i)}$ are degree-$m$ polynomials of $Y^{(i)}$ and $\lambda_m^{(i)} = t^m$ for some $t\in(0, 1)$ independent of $i$. 
This shows that the product GRP generalizes the multivariate random polynomials.
Also, when $Y^{(i)}$ are i.i.d. and $(Q^{(i)}, \lambda^{(i)})$ are the same for all $i=1,\ldots,d$, then we say $G^{(i)}$ are i.i.d. and we can particular write $G\simeq (G^{(1)})^{\otimes d}$.
\end{Example}
A straightforward generalization follows from the classical way of proving hypercontractivity. 
Nevertheless, it turns out to be very useful for treating multivariate hypercontractivity of our GRP setting.
\begin{thm}\label{key-prop}
    Let $r\in (0, 1]$ and $p>2$.
    If $d$ independent GRPs
    $G^{(1)}, \ldots, G^{(d)}$
    are all $(2, p; s)$-hypercontractive,
    then their product 
    $G = G^{(1)}\otimes\cdots\otimes G^{(d)}$
    is also $(2, p; s)$-hypercontracitve.
\end{thm}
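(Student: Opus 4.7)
The plan is to first identify $T(G)^s$ on the product GRP with the tensor product $\bigotimes_{i=1}^d T(G^{(i)})^s$, and then invoke the classical fact that a tensor product of $L^2\to L^p$ contractions remains an $L^2\to L^p$ contraction whenever $p\ge 2$. Both ingredients are familiar from the Gaussian proof of Theorem~\ref{thm:hc-moment}, but the abstract GRP framework requires extra bookkeeping at the factorisation step.

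For the factorisation, I would exploit that by independence of $Y^{(1)},\ldots,Y^{(d)}$ the Hilbert space $L^2(\sigma(Y))$ is canonically the completed tensor product $\bigotimes_{i=1}^d L^2(\sigma(Y^{(i)}))$, so taking the orthogonal decomposition in each factor shows that the subspaces $\bigotimes_{i=1}^d H_{m_i}(Y^{(i)})$ span a dense subspace. A pure tensor $f_1\otimes\cdots\otimes f_d$ with $f_i\in H_{m_i}(Y^{(i)})$ lies in $\overline{Q_k(Y)}\cap Q_{k-1}(Y)^\perp$ for the index $k$ with $\lambda_k=\prod_i\lambda^{(i)}_{m_i}$, by the very definition of the product $Q_m$ and $\lambda_m$. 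Hence such a tensor sits in a single chaos component of the product GRP with $T(G)^s$-eigenvalue $\prod_i(\lambda^{(i)}_{m_i})^s$, giving $T(G)^s(f_1\otimes\cdots\otimes f_d)=\bigotimes_{i=1}^d T(G^{(i)})^s f_i$ on the dense subspace, and by linearity and $L^2$-continuity the identity $T(G)^s=\bigotimes_{i=1}^d T(G^{(i)})^s$ then holds on all of $L^2(\sigma(Y))$.

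With the factorisation in hand I would run an induction on $d$. The base case $d=1$ is the hypothesis. For the inductive step, set $S_1\coloneqq\bigotimes_{i<d}T(G^{(i)})^s$, which is already a contraction $L^2\to L^p$ by the inductive hypothesis, and $S_2\coloneqq T(G^{(d)})^s$. For $X\in L^2(\sigma(Y))$, applying the $S_1$-contraction with $Y^{(d)}$ frozen, raising to the $2/p$-th power (valid since $p\ge 2$) and integrating over $Y^{(d)}$ gives the mixed-norm bound $\|(S_1\otimes I)X\|_{L^p(Y^{(1)},\ldots,Y^{(d-1)})L^2(Y^{(d)})}\le\|X\|_{L^2}$. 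Then applying the $S_2$-contraction in the last coordinate pointwise in the remaining variables and integrating yields $\|(S_1\otimes S_2)X\|_{L^p}\le\|(S_1\otimes I)X\|_{L^p(Y^{(1)},\ldots,Y^{(d-1)})L^2(Y^{(d)})}\le\|X\|_{L^2}$, closing the induction.

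The hard part is the factorisation step: one has to untangle the product-GRP definition carefully --- in particular the possibility of ties among the numbers $\prod_i\lambda^{(i)}_{m_i}$ --- in order to verify that $H_m(Y)$ really is the orthogonal sum, over tuples $(m_1,\ldots,m_d)$ with $\prod_i\lambda^{(i)}_{m_i}=\lambda_m$, of the tensor products $\bigotimes_i H_{m_i}(Y^{(i)})$, and that the residual space $H_\infty(Y)$ is compatible with the tensor action (on which $T(G)^s$ vanishes). Once that identification and the tensor factorisation of $T(G)^s$ are secured, the contraction step is a routine two-step Fubini argument using only independence and $p\ge 2$.
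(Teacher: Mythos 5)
Your proposal is, at bottom, the same proof as the paper's: reduce to two factors, and run a two-step argument that applies each factor's $L^2\to L^p$ contraction in turn, with a mixed-norm rearrangement in between. The paper phrases it as $d=2$ plus associativity and decomposes $X=\sum_{\ell,m}X_{\ell,m}$ with $X_{\ell,m}\in H_\ell^{(1)}\otimes H_m^{(2)}$; you phrase it as the tensor factorisation $T(G)^s=\bigotimes_i T(G^{(i)})^s$ plus induction on $d$. Those are equivalent.

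The one place you should be more careful is the step you call a ``routine two-step Fubini argument.'' Trace the two mixed norms. Applying $S_1$ with $Y^{(d)}$ frozen and integrating gives
\[
\Bigl\lVert\,\lVert (S_1\otimes I)X\rVert_{L^p_{Y^{(1:d-1)}}}\,\Bigr\rVert_{L^2_{Y^{(d)}}}\le\lVert X\rVert_{L^2},
\]
i.e.\ outer $L^2$ in $Y^{(d)}$, inner $L^p$ in $Y^{(1:d-1)}$. Applying $S_2$ in the last coordinate pointwise in $Y^{(1:d-1)}$ gives
\[
\lVert (S_1\otimes S_2)X\rVert_{L^p}\le\Bigl\lVert\,\lVert (S_1\otimes I)X\rVert_{L^2_{Y^{(d)}}}\,\Bigr\rVert_{L^p_{Y^{(1:d-1)}}},
\]
i.e.\ outer $L^p$ in $Y^{(1:d-1)}$, inner $L^2$ in $Y^{(d)}$. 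These are \emph{different} mixed norms, and your chain writes them with the same symbol $\lVert\cdot\rVert_{L^p(Y^{(1:d-1)})L^2(Y^{(d)})}$, which elides the swap. Passing from the second to the first is exactly Minkowski's integral inequality with exponent $p/2\ge1$; that is precisely the ``(by Minkowski)'' line in the paper's proof and is where the hypothesis $p\ge2$ is actually used (your parenthetical ``valid since $p\ge2$'' is attached to the harmless step of raising a scalar inequality to the power $2/p$, which needs no such hypothesis). Make the Minkowski step explicit and your argument is complete.

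On the factorisation side, your bookkeeping concerns are reasonable but not a real obstacle: one can work on the dense subspace spanned by pure tensors in $\bigcup_m Q_m(Y)$ (exactly as the paper does by writing $X_{\ell,m}$ as a finite sum of products), extend by $L^2$-density, and note that both $T(G)^s$ and $\bigotimes_i T(G^{(i)})^s$ annihilate the residual $H_\infty(Y)$ component, so the ties among the products $\prod_i\lambda^{(i)}_{m_i}$ only affect how the chaos components are indexed, not the action of the operator.
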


\begin{rem}
    We only use the $(2, p; s)$-hypercontractivity
    in this paper, but
    we can also deduce the same results for
    the general $(q, p; s)$-hypercontractivity with
    $1\le q \le p < \infty$
    (for the operator norm of $L^q \to L^p$),
    analogous to e.g. \citet{GH-book}.
\end{rem}

The following is a parallel result of
Theorem \ref{thm:hc-moment}
and the proof is almost identical.
\begin{prop}\label{prop:gen-moment}
    Let $s>0$ and $p>2$.
    If $G$ is a GRP
    that is $(2, p; s)$-hypercontractive,
    then we have
    $\lVert X\rVert_{L^p}
    \le (\tilde{\deg}_GX)^s \lVert X\rVert_{L^2}$
    for all $X\in L^2$.
\end{prop}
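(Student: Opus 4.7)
The plan is to mirror the proof of Theorem \ref{thm:hc-moment} word-for-word, with the scaling factor $r^m=(p-1)^{m/2}$ replaced by the GRP weights $\lambda_m^s$. The only differences are bookkeeping: we have to handle the possibility that $\tilde{\deg}_G X=\infty$, and we have to know that the weights we divide by are actually positive.

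First I would dispose of the trivial case. If $\tilde{\deg}_G X=\infty$ (either because $X$ lies in no $\overline{Q_M(Y)}$, or because $X\in \overline{Q_M(Y)}$ only for indices $M$ at which $\lambda_M=0$), the claimed inequality reads $\lVert X\rVert_{L^p}\le\infty\cdot\lVert X\rVert_{L^2}$, which is vacuous. So assume $\tilde{\deg}_G X<\infty$. Since $\lambda_0>\lambda_1\ge\lambda_2\ge\cdots\ge 0$ is nonincreasing and the sets $\overline{Q_m(Y)}$ are nondecreasing in $m$, the infimum defining $\tilde{\deg}_G X$ is attained at the smallest $M$ with $X\in\overline{Q_M(Y)}$, and $\lambda_M>0$. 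By monotonicity $\lambda_m>0$ for every $m\le M$, so the scalars $\lambda_m^{-s}$ are well defined in this range.

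Next, using the orthogonal decomposition associated with $G$, write $X=\sum_{m=0}^M X_m$ with $X_m\in H_m(Y)$ (and $X_m=0$ for $m>M$, $X_\infty=0$). Define the auxiliary element
\[
    Z \coloneqq \sum_{m=0}^M \lambda_m^{-s} X_m,
\]
which is a finite sum of $L^2$-elements, hence lies in $L^2(\Omega,\sigma(Y),\p)$; in fact $Z\in\overline{Q_M(Y)}$. By construction $T(G)^s Z=\sum_{m=0}^M \lambda_m^{s}\lambda_m^{-s}X_m=X$. Invoking $(2,p;s)$-hypercontractivity of $G$,
\[
    \lVert X\rVert_{L^p} = \lVert T(G)^s Z\rVert_{L^p}\le \lVert Z\rVert_{L^2}.
\]
Finally, by orthogonality of the $X_m$ and monotonicity of $(\lambda_m^{-s})_{m\le M}$,
\[
    \lVert Z\rVert_{L^2}^2=\sum_{m=0}^M \lambda_m^{-2s}\lVert X_m\rVert_{L^2}^2
    \le \lambda_M^{-2s}\sum_{m=0}^M \lVert X_m\rVert_{L^2}^2
    =(\tilde{\deg}_G X)^{2s}\lVert X\rVert_{L^2}^2,
\]
which combined with the previous display yields the claim.

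There is no real obstacle here; the only point requiring a moment's care is the argument that $\lambda_m>0$ for $m\le M$ so that the rescaling is legitimate, together with checking that $Z$ is in fact in the domain of $T(G)^s$. Everything else is a direct transcription of the Gaussian argument, which is exactly why the authors' remark ``the proof is almost identical'' applies.
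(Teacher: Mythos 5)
Your proof is correct and follows essentially the same route as the paper's: decompose $X=\sum_{m=0}^M X_m$ with $X_m\in H_m(Y)$, apply $T(G)^s$ to the rescaled element $\sum_m\lambda_m^{-s}X_m$, invoke $(2,p;s)$-hypercontractivity, and finish by monotonicity of $\lambda_m^{-s}$ in $m$. The extra care you take about the case $\tilde{\deg}_G X=\infty$ and about the positivity of $\lambda_m$ for $m\le M$ is a welcome refinement that the paper leaves implicit (the paper also has an index typo, writing $\lambda_m^{-s}$ where $\lambda_n^{-s}$ is meant in its final inequality).
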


\begin{rem}
    Although we have treated general GRPs $G=(Y, Q, \lambda)$
    in these propositions,
    we are basically only interested in the moment inequality
    for $X$ up to some ``degree''
    fixed beforehand
    (in the case of Wiener chaos,
    it suffices to treat $P_n(H)$ for some finite $n$
    to obtain Theorem \ref{thm:hc-moment}).
    Thus, our main interest is in ``finite''
    GRPs, satisfying
    $Q_n = Q_{n+1} = \cdots$ for some $n$,
    and their product in practice,
    which the might be better for readers to have in mind
    when reading the next proposition.
\end{rem}

We next show the following ``converse''
result
for the relation of the
hypercontractivity and moment estimate
for a (truncated) GRP when $p=4$.

\begin{prop}\label{prop:prod-hc}
    Let $G = (Y, Q, \lambda)$ be a GRP.
    Suppose there exists a $s>0$ such that
    \[
        \lVert X_m \rVert_{L^4} \le
        \lambda_m^{-s}\lVert X_m\rVert_{L^2},
        \qquad X_m\in H_m(Y)
    \]
    holds for all $m$.
    If $t>s$ satisfies
    \[\sum_{m\ge1}\lambda_m^{t-s}\le 1/\sqrt3\]
    and $\lambda_1^t\le1/2$,
    then $G$ is $(2,4;t)$-hypercontractive.
\end{prop}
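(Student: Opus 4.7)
The plan is to expand the fourth moment of $T(G)^t X$ explicitly and exploit the fact that only the constant component $X_0 \in H_0(Y)$ behaves differently from the rest. Decompose
\[
    X = X_0 + \sum_{m \ge 1} X_m + X_\infty, \qquad X_m \in H_m(Y),\ X_\infty \in H_\infty(Y),
\]
so that $T(G)^t X = X_0 + W$ with $W \coloneqq \sum_{m \ge 1} \lambda_m^t X_m$, and set $\sigma_m \coloneqq \lVert X_m \rVert_{L^2}$ and $S \coloneqq \sum_{m \ge 1} \sigma_m^2$. Since $\lVert X \rVert_{L^2}^2 \ge X_0^2 + S$, it suffices to show $\lVert X_0 + W \rVert_{L^4}^2 \le X_0^2 + S$.

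First I would record two norm estimates on $W$. Orthogonality of the $H_m(Y)$ together with monotonicity of $\lambda$ gives
\[
    \lVert W \rVert_{L^2}^2 = \sum_{m \ge 1} \lambda_m^{2t} \sigma_m^2 \le \lambda_1^{2t} S \le S/4,
\]
using $\lambda_1^t \le 1/2$. For the $L^4$ bound, Minkowski and the hypothesis yield $\lVert W \rVert_{L^4} \le \sum_{m \ge 1} \lambda_m^{t-s} \sigma_m$; splitting $\lambda_m^{t-s}\sigma_m = \lambda_m^{(t-s)/2}\cdot\lambda_m^{(t-s)/2}\sigma_m$ and applying Cauchy--Schwarz gives
\[
    \Bigl(\sum_{m \ge 1} \lambda_m^{t-s} \sigma_m\Bigr)^{\!2} \le \Bigl(\sum_{m \ge 1} \lambda_m^{t-s}\Bigr)\Bigl(\sum_{m \ge 1} \lambda_m^{t-s} \sigma_m^2\Bigr) \le \frac{1}{\sqrt 3}\cdot \lambda_1^{t-s}\, S \le \frac{S}{3},
\]
using $\sum_{m \ge 1} \lambda_m^{t-s} \le 1/\sqrt 3$ and the consequent $\lambda_1^{t-s} \le 1/\sqrt 3$. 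Thus $\lVert W \rVert_{L^4}^2 \le S/3$.

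Because $W$ is orthogonal to constants we have $\E{W}=0$, so expanding fourth powers gives
\[
    \E{(X_0 + W)^4} = X_0^4 + 6 X_0^2 \E{W^2} + 4 X_0 \E{W^3} + \E{W^4}.
\]
The cubic moment is handled by Cauchy--Schwarz, $\lvert\E{W^3}\rvert \le \lVert W\rVert_{L^2}\lVert W\rVert_{L^4}^2$; substituting the two bounds above yields
\[
    \E{(X_0 + W)^4} \le X_0^4 + \tfrac{3}{2} X_0^2 S + \tfrac{2}{3}\lvert X_0\rvert S^{3/2} + \tfrac{1}{9} S^2.
\]

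The remaining step is to absorb the cross term by AM--GM: since $\tfrac{2}{3}\lvert X_0\rvert S^{3/2} = 2\sqrt{\tfrac{1}{2} X_0^2 S \cdot \tfrac{2}{9} S^2}$, one has $\tfrac{2}{3}\lvert X_0\rvert S^{3/2} \le \tfrac{1}{2} X_0^2 S + \tfrac{2}{9} S^2$, whence $\E{(X_0+W)^4} \le X_0^4 + 2 X_0^2 S + \tfrac{1}{3} S^2 \le (X_0^2+S)^2$, and taking square roots finishes the argument. The main obstacle is not any individual estimate but the calibration: the constants $1/\sqrt 3$ and $1/2$ in the hypothesis are tuned precisely so that the resulting $L^2$ contraction factor $1/4$, the $L^4$ contraction factor $1/3$, and the geometric mean appearing in AM--GM all line up to close the degree-four inequality.
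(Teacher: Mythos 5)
Your proof is correct, and it streamlines the paper's argument into a single computation. The paper splits into two cases according to whether the constant component $X_0$ vanishes: in the centered case it establishes $\lVert T(G)^t X\rVert_{L^4} \le 3^{-1/4}\lVert X\rVert_{L^2}$; in the non-centered case (normalizing $X_0=1$) it expands the fourth moment, bounds the cubic term by the pointwise AM--GM $4z^3 \le 2z^2 + 2z^4$, and then verifies the two resulting inequalities $4\E{Z^2}\le\E{W^2}$ and $3\E{Z^4}\le\E{W^2}^2$ separately. You instead work with the decomposition $T(G)^tX = X_0+W$ directly, bound the cubic moment by the H\"older-type estimate $\lvert\E{W^3}\rvert \le \lVert W\rVert_{L^2}\lVert W\rVert_{L^4}^2$ rather than a pointwise inequality, and close with a single scalar AM--GM tuned to the quadratic form in $(X_0^2, S)$. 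A smaller difference: your Cauchy--Schwarz splits $\lambda_m^{t-s}\sigma_m = \lambda_m^{(t-s)/2}\cdot\lambda_m^{(t-s)/2}\sigma_m$ and uses $\lambda_1^{t-s}\le\sum_{m\ge1}\lambda_m^{t-s}\le1/\sqrt3$ a second time to reach $\lVert W\rVert_{L^4}^2\le S/3$, whereas the paper's split $\lambda_m^{t-s}\cdot\sigma_m$ together with $\sum\lambda_m^{2(t-s)}\le\sum\lambda_m^{t-s}$ gives only $\le S/\sqrt3$. Both constants suffice for the theorem --- with the weaker $S/\sqrt3$ your final AM--GM still closes using the weights $\tfrac12 X_0^2 S + \tfrac23 S^2$ --- so the sharper bound is cosmetic, but it lets your calibration $\tfrac12 X_0^2 S + \tfrac29 S^2$ land cleanly on $(X_0^2+S)^2$. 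You also handle the $H_\infty(Y)$ component explicitly via $\lVert X\rVert_{L^2}^2 \ge X_0^2 + S$, which the paper treats by an implicit reduction to $X_\infty=0$; that is a worthwhile clarification.
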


By using this,
we can also prove the following
as a non-quantitative result.
\begin{thm}\label{thm:grp-main}
    Let $K>0$
    and $G$ be a GRP such that
    the space
    $\{X\in L^2 \mid \tilde{\deg}_G X \le K\}$
    is included in $L^4(\Omega,\F,\p)$
    and finite-dimensional.
    Then, there exists a constant
    $C=C(G, K)$
    such that for an arbitrary $d$,
    $
        \lVert X\rVert_{L^4}
        \le C\lVert X\rVert_{L^2}
    $
    holds
    if we have
    $\tilde{\deg}_{G^{\otimes d}}X \le K$.
\end{thm}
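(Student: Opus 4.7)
The plan is to chain the three results already stated in Section~\ref{sec:gen-hc}: derive single-variable hypercontractivity from the finite-dimensionality assumption via Proposition~\ref{prop:prod-hc}, tensorize via Theorem~\ref{key-prop}, and then convert back to a moment inequality via Proposition~\ref{prop:gen-moment}. The constant so obtained will depend only on $G$ and $K$, not on $d$, which is exactly what is claimed.

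First I would reduce to a truncated GRP. Let $V := \{X \in L^2 \mid \tilde{\deg}_G X \le K\}$; by hypothesis this is finite-dimensional and included in $L^4$. Choose $M$ large enough so that $V = \overline{Q_M(Y)}$ and $\lambda_M \ge 1/K$ (such an $M$ exists because the $Q_m$ are nondecreasing and their union over $\{m : \lambda_m \ge 1/K\}$ has finite dimension). Define a new GRP $\hat G = (Y, \hat Q, \hat\lambda)$ by $\hat Q_m = Q_{\min(m,M)}$ and $\hat\lambda_m = \lambda_m$ for $m \le M$, $\hat\lambda_m = 0$ for $m > M$. A direct check using the definition of the product GRP shows that for every $d$, the condition $\tilde{\deg}_{G^{\otimes d}} X \le K$ is equivalent to $\tilde{\deg}_{\hat G^{\otimes d}} X \le K$ (both reduce to $X$ lying in the span of tensor products $\prod_i f_i$ with $f_i \in Q_{m_i}$ and $\prod_i \lambda_{m_i} \ge 1/K$, which forces every $m_i \le M$). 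So it suffices to bound $\|X\|_{L^4}$ for $X$ of bounded $\hat G^{\otimes d}$-degree.

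Next I would verify the hypothesis of Proposition~\ref{prop:prod-hc} for $\hat G$. For $m = 0$, $H_0(Y)$ consists of constants so the inequality is trivial. For $1 \le m \le M$, $H_m(Y) \subset \overline{Q_M(Y)} = V$ is finite-dimensional and contained in $L^4$, so by equivalence of norms on a finite-dimensional space there exists $C_m < \infty$ with $\|X_m\|_{L^4} \le C_m \|X_m\|_{L^2}$ on $H_m(Y)$; since $\hat\lambda_m = \lambda_m \in (0,1)$, we may choose $s \ge \max_{1 \le m \le M} (\log^+ C_m)/\log(1/\lambda_m)$ so that $C_m \le \hat\lambda_m^{-s}$. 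For $m > M$, $H_m(\hat G)(Y) = \{0\}$ and the bound is vacuous. Then, since the sum $\sum_{m \ge 1} \hat\lambda_m^{t-s} = \sum_{m=1}^M \lambda_m^{t-s}$ has finitely many terms each strictly less than $1$, and $\hat\lambda_1^t = \lambda_1^t < 1$, we can enlarge $t > s$ to make both $\sum_{m \ge 1} \hat\lambda_m^{t-s} \le 1/\sqrt 3$ and $\hat\lambda_1^t \le 1/2$ hold simultaneously. Proposition~\ref{prop:prod-hc} then yields that $\hat G$ is $(2,4;t)$-hypercontractive.

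Finally, Theorem~\ref{key-prop} lifts this to $\hat G^{\otimes d}$ for every $d$ with the \emph{same} exponent $t$, and Proposition~\ref{prop:gen-moment} gives $\|X\|_{L^4} \le (\tilde{\deg}_{\hat G^{\otimes d}} X)^t \|X\|_{L^2} \le K^t \|X\|_{L^2}$ whenever $\tilde{\deg}_{G^{\otimes d}} X \le K$, so $C := K^t$ works. The main obstacle is the bookkeeping in the first step: one has to make sure that truncating $G$ both preserves the relevant notion of degree in the product and leaves the $H_m(Y)$ slabs unchanged for $m \le M$, so that finite-dimensionality of $V$ really does transfer into a per-slab $L^4$–$L^2$ comparison. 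Once this is set up cleanly, the quantitative estimates of Propositions~\ref{prop:prod-hc} and \ref{prop:gen-moment} combined with the tensorization of Theorem~\ref{key-prop} do the rest automatically.
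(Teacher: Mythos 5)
Your proposal is correct and follows essentially the same route as the paper: truncate the GRP so that $Q(Y)$ equals the finite-dimensional space $\{X:\tilde{\deg}_G X\le K\}$, use equivalence of norms on a finite-dimensional subspace of $L^4$ to get the per-slab $L^4$--$L^2$ bounds, feed those into Proposition~\ref{prop:prod-hc} to obtain $(2,4;t)$-hypercontractivity of the truncated GRP, tensorize by Theorem~\ref{key-prop}, and conclude via Proposition~\ref{prop:gen-moment} with $C=K^t$. The only difference is cosmetic: you spell out the per-slab constants $C_m$ and the choice of $s$ explicitly, whereas the paper first records the global supremum $\sup_{X\in Q(Y)\setminus\{0\}}\lVert X\rVert_{L^4}/\lVert X\rVert_{L^2}<\infty$ and then implicitly uses that this dominates each slab.
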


\section{Applications}\label{sec:app}
The generality of Proposition \ref{prop:prod-hc} and Theorem \ref{thm:grp-main} allows to quantify the number of samples resp.~probability of success of the random convex hull approach to the problem of cubature. 
Concretely, we give formal statements of Theorem \ref{thm:informal} for \begin{enumerate*}[label=(\roman*)]
  \item Classical Cubature,
  \item Cubature on Wiener Space,
  \item Kernel Quadrature.
\end{enumerate*} various cubature constructions. 
\subsection{Classical Polynomial Cubatures}
When the GRP $G$ are actual random polynomials, we recover the following result
\begin{cor}
    Let $m$ be a positive integer
    and $X^{(1)}, X^{(2)}, \ldots$ be i.i.d. real-valued random variables
    with $\E{\lvert X^{(1)}\rvert^{4m}}<\infty$.
    Then, there exists a constant $C_m>0$ such that
    \[
        \lVert f(X^{(1)}, \ldots, X^{(d)})\rVert_{L^4} \le
        C_m\lVert f(X^{(1)}, \ldots, X^{(d)})\rVert_{L^2}
    \]
    for any positive integer $d$ and any polynomial $f:\R^d\to\R$ with degree up to $m$.
\end{cor}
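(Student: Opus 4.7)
The plan is to construct a univariate GRP $G$ whose product $G^{\otimes d}$ captures multivariate polynomials of bounded total degree, and then invoke Theorem~\ref{thm:grp-main}. Fix any $r\in(0,1)$ and set $G=(X^{(1)},Q,\lambda)$ with $Q_k$ the space of polynomials in $X^{(1)}$ of degree at most $k$ and $\lambda_k=r^k$; this is a valid GRP because $Q_0\subset Q_1\subset\cdots\subset L^2$ and $1=\lambda_0>\lambda_1=r\ge\lambda_2\ge\cdots\ge 0$.

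Next I would verify the two hypotheses of Theorem~\ref{thm:grp-main} with $K=r^{-m}$. Since $Q_m(X^{(1)})$ is finite-dimensional and hence closed in $L^2$, the set $\{X\in L^2\mid \tilde{\deg}_G X\le K\}$ equals $Q_m(X^{(1)})$, the $(m+1)$-dimensional space of polynomials in $X^{(1)}$ of degree at most $m$; the hypothesis $\E{\lvert X^{(1)}\rvert^{4m}}<\infty$ is precisely what is needed to ensure this finite-dimensional space sits inside $L^4$. To see that the degree condition propagates uniformly in $d$, observe that in the product GRP each monomial $\prod_{i=1}^d x_i^{m_i}$ with $\sum_i m_i\le m$ carries product weight $\prod_i\lambda^{(i)}_{m_i}=r^{\sum_i m_i}\ge r^m$, so every polynomial $f:\R^d\to\R$ of total degree at most $m$ lies in some $Q_M$ of $G^{\otimes d}$ with $\lambda_M\ge r^m$, giving $\tilde{\deg}_{G^{\otimes d}}f(X^{(1)},\ldots,X^{(d)})\le r^{-m}=K$ regardless of $d$. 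Theorem~\ref{thm:grp-main} then produces a constant $C_m=C(G,K)$ depending only on $m$ and the law of $X^{(1)}$, yielding the desired inequality.

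All of the substantive work is done by Theorem~\ref{thm:grp-main} itself (which in turn draws on Proposition~\ref{prop:prod-hc} and the product statement Theorem~\ref{key-prop}), so the main thing to get right here is the choice of the univariate weight sequence $\lambda_k=r^k$: this geometric choice is what makes the product weights factorise as $r^{\sum_i m_i}$ and hence encode total degree rather than some other graded quantity. Given that, the uniform bound $K=r^{-m}$ is immediate and Theorem~\ref{thm:grp-main} finishes the argument essentially for free; the only real risk is the edge case $m=0$, but there $f$ is constant and the inequality is trivial.
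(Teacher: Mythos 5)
Your proposal is correct and takes essentially the same route as the paper: the paper's one-line proof sets up the univariate GRP with $Q_i$ the degree-$\le i$ polynomials, $\lambda_i = 2^{-i}$ truncated at $i=m$, and invokes Theorem~\ref{thm:grp-main}. Your argument is the same construction with a generic geometric ratio $r\in(0,1)$ in place of $1/2$ and $K=r^{-m}$, together with the (correct) observation that the geometric weights make total degree the graded quantity under $G^{\otimes d}$; these are cosmetic variations, not a different method.
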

\begin{proof}
    By introducing a truncated GRP given by a random variable $X^{(1)}$,
    function spaces $Q_i$ of univariate polynomials up to degree $i$,
    and $\lambda_i=2^{-i}$ for $0\le i\le m$,
    we can apply Theorem \ref{thm:grp-main} to obtain the desired result.
\end{proof}

If we combine this with Theorem \ref{hykw-moment},
we obtain the following result for polynomial cubatures:
\begin{cor}
    Let $m\ge1$ be an integer and $X^{(1)}, X^{(2)},\ldots$
    be i.i.d. real-valued random variables
    with $\E{\lvert X^{(1)}\rvert^{4m}} < \infty$.
    Then, there exists a constant $C_m>0$ such that the following holds:
    \begin{quote}
        Let $d\ge1$ be an integer
        and $\bm\phi:\R^d\to\R^D$ be a $D$-dimensional vector-valued function
        such that each coordinate is given by a polynomial up to degree $m$.
        If we let $\bm{X}^{(1:d)}_1, \bm{X}^{(1:d)}_2, \ldots$
        be independent copies of $\bm{X}^{(1:d)} = (X^{(1)}, \ldots, X^{(d)})$,
        we have
        \[
            \P{\E{\bm\phi(\bm{X}^{(1:d)})}\in\cv\{\bm\phi(\bm{X}^{(1:d)}_1),
            \ldots, \bm\phi(\bm{X}^{(1:d)}_N)\}} \ge\frac12
        \]
        for all integers $N\ge C_mD$.
    \end{quote}
\end{cor}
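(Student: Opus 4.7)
The plan is to combine the preceding corollary (a degree-dependent $L^4$--$L^2$ bound for multivariate polynomials) with the Berry--Esseen style bound of Theorem~\ref{hykw-moment}, which requires an $L^3$--$L^2$ control on linear functionals of the random vector. The coupling is mediated by Lyapunov's (equivalently Jensen's) inequality, which gives $\lVert Y\rVert_{L^3}\le \lVert Y\rVert_{L^4}$ for any real-valued random variable $Y$.

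First I would write $\bm X^{(1:d)} = (X^{(1)},\ldots,X^{(d)})$ and fix $c\in\R^D$. Each coordinate of $\bm\phi$ is a polynomial of degree at most $m$, so $f_c(\bm X^{(1:d)}):=c^\top\bigl(\bm\phi(\bm X^{(1:d)}) - \E{\bm\phi(\bm X^{(1:d)})}\bigr)$ is a (centred) polynomial in $X^{(1)},\ldots,X^{(d)}$ of degree at most $m$. Because $\E{|X^{(1)}|^{4m}}<\infty$, the preceding corollary yields a constant $C_m>0$, depending only on $m$ and the law of $X^{(1)}$, with
\[
\lVert f_c(\bm X^{(1:d)})\rVert_{L^4} \le C_m\lVert f_c(\bm X^{(1:d)})\rVert_{L^2}.
\]
Applying Lyapunov's inequality on the left side then gives the $L^3$--$L^2$ bound
\[
\lVert c^\top(\bm\phi(\bm X^{(1:d)}) - \E{\bm\phi(\bm X^{(1:d)})})\rVert_{L^3}
\le C_m\lVert c^\top(\bm\phi(\bm X^{(1:d)}) - \E{\bm\phi(\bm X^{(1:d)})})\rVert_{L^2}
\]
with the same constant $C_m$ for every $c\in\R^D$ and every $d$.

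Now I would invoke Theorem~\ref{hykw-moment} with the random vector $X = \bm\phi(\bm X^{(1:d)})$ and $K=C_m$, which gives $N_X(\E{X}) \le 17(1+9C_m^6/4)D$. Setting $C_m' := 17(1+9C_m^6/4)$ (still depending only on $m$ and the law of $X^{(1)}$) and recalling the definition of $N_X$, for every integer $N\ge C_m'D$ the event
\[
\E{\bm\phi(\bm X^{(1:d)})} \in \cv\{\bm\phi(\bm X^{(1:d)}_1),\ldots,\bm\phi(\bm X^{(1:d)}_N)\}
\]
occurs with probability at least $1/2$, which is the conclusion (after renaming $C_m'$ back to $C_m$).

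I do not expect any real obstacle: both ingredients (the uniform hypercontractivity constant from the previous corollary and the Tukey/Berry--Esseen bound of Theorem~\ref{hykw-moment}) have been set up precisely so that they plug together. The only small care needed is to (i) check that centring $c^\top\bm\phi$ keeps the degree at most $m$, so that the $L^4$--$L^2$ bound applies to the centred polynomial, and (ii) note that the integrability assumption $\E{|X^{(1)}|^{4m}}<\infty$ is exactly what guarantees that every such centred polynomial lies in $L^4$, so that the passage through Lyapunov's inequality is legitimate and the resulting constant is uniform in $d$ and $D$.
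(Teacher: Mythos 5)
Your proposal is correct and matches the paper's intended argument exactly: the paper states the corollary immediately after the $L^4$--$L^2$ bound with the remark ``If we combine this with Theorem~\ref{hykw-moment},'' and your write-up simply spells out that combination, including the (standard but necessary) intermediate step of passing from $L^4$ to $L^3$ via Lyapunov and the observation that centring by a constant preserves the degree bound.
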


\subsection{Cubature on Wiener Space}\label{sec:cow}

Cubature on Wiener space \citep{lyo04}
is a weak approximation scheme for stochastic differential equations; at the hear of it is the construction of a finite measure on pathspace, such that the expectation of their first $m$-times iterated integrals matches those of Brownian motion. 
Some algebraic constructions are known for degree $m = 3, 5$ \citep{lyo04}
as well as $m=7$ \citep{nin21}. 
The random convex hull approach applies in principle for any $m$, however, a caveate is that the discretization of paths becomes an issue in particular for high values of $m$; some experimental results are available in \citep{hayakawa-CoW} for constructing them by using random samples of piecewise linear approximations of Brownian motion.
In this section, we use hypercontractivity to estimate the number of samples needed in this approach to cubature via sampling.

\paragraph{Random Convex Hulls of Iterated Integrals.}
For a bounded-variation (BV) path $x=(x^0, \ldots, x^d):[0, 1]\to\R^{d+1}$
and a $d$-dimensional standard Brownian motion $B = (B^1, \ldots, B^d)$ with $B^0_t:=t$,
we define the iterated integrals as
\[
    I^\alpha(x):=\int_{0<t_1<\cdots<t_k<1}\dd x^{\alpha_1}_{t_1}\cdots\dd x^{\alpha_k}_{t_k},
    \qquad
    I^\alpha(B):=\int_{0<t_1<\cdots<t_k<1}\circ\dd B^{\alpha_1}_{t_1}\cdots\circ\!\dd B^{\alpha_k}_{t_k},
\]
where the latter is given by the Stratonovich stochastic integral.
Then, a degree $m$ cubature formula on Wiener space
for $d$-dimensional Brownian motion is
a set of BV paths $x_1,\ldots,x_n :[0, 1]\to\R^{d+1}$
and convex weights $w_1,\ldots,w_n$ such that
$\sum_{i=1}^n w_i I^\alpha(x_i) = \E{I^\alpha(B)}$
for all multi-indices $\alpha = (\alpha_1, \ldots, \alpha_k)\in\bigcup_{\ell\ge1}\{0, 1, \ldots, d\}^\ell$
with $\lVert\alpha\rVert:=k+\lvert\{j\mid \alpha_j=0\}\rvert\le m$.

Indeed, if we consider the Gaussian Hilbert space
given by
\[
    H:=\left\{
        \sum_{i=1}^d\int_0^1 f_i(t)\dd B^i_t
        \lmid
        f_1,\ldots,f_d\in L^2([0, 1])
    \right\},
\]
the iterated integral $I^\alpha(B)$
with $\lVert\alpha\rVert\le m$
is in the $m$-th Wiener chaos $\overline{P_m(H)}$
(see Section \ref{sec:3})
as it can be expressed as
a limit of polynomials of increments of $B$.
We thus have the hypercontructivity given in Theorem \ref{thm:hc-moment}
and the following assertion:
\begin{cor}
    Let $d, m\ge1$ be integers and $B$ be a $d$-dimensional Brownian motion.
    Then, for an arbitrary linear combination
    $X = \sum_{\lVert\alpha\rVert\le m}c_\alpha I^\alpha(B)$
    with $c_\alpha\in\R$,
    we have $\lVert X\rVert_{L^3} \le 2^{m/2}\lVert X\rVert_{L^2}$.
\end{cor}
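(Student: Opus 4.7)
The proof should be essentially a one-line application of Theorem~\ref{thm:hc-moment} once we verify the chaos membership claim asserted just before the statement. Here is my plan.

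First, I would make precise why $I^\alpha(B) \in \overline{P_m(H)}$ whenever $\lVert\alpha\rVert \le m$. Writing $\alpha = (\alpha_1,\ldots,\alpha_k)$ and letting $k_0 := |\{j : \alpha_j = 0\}|$, $k_1 := k - k_0$, we have $\lVert\alpha\rVert = k + k_0 = 2k_0 + k_1$. The Stratonovich iterated integral $I^\alpha(B)$ can be obtained as an $L^2$-limit of Riemann-sum polynomials in increments of $B$; each increment $B^j_t - B^j_s$ with $j\ge1$ lies in $H$, while the $\alpha_j = 0$ slots produce only deterministic $dt$ factors. Hence $I^\alpha(B)$ lies in the closure of polynomials of degree at most $k_1$ in elements of $H$, i.e.\ $I^\alpha(B) \in \overline{P_{k_1}(H)} \subset \overline{P_m(H)}$, since $k_1 \le 2k_0 + k_1 = \lVert\alpha\rVert \le m$.

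Next, since $\overline{P_m(H)}$ is a linear subspace of $L^2$, the arbitrary linear combination $X = \sum_{\lVert\alpha\rVert \le m} c_\alpha I^\alpha(B)$ also belongs to $\overline{P_m(H)}$. Then I would apply Theorem~\ref{thm:hc-moment} with $n = m$ and $p = 3$ to obtain
\[
    \lVert X\rVert_{L^3} \le (3-1)^{m/2}\lVert X\rVert_{L^2} = 2^{m/2}\lVert X\rVert_{L^2},
\]
which is exactly the desired inequality.

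There is no real obstacle beyond being careful about the first step: the definition of $\lVert\alpha\rVert$ inflates time-slot indices ($\alpha_j = 0$) by a factor two so as to compensate for the fact that they do not raise the Wiener-chaos order, ensuring that the ``chaos degree'' of $I^\alpha(B)$ is always dominated by $\lVert\alpha\rVert$. Once this bookkeeping is in place, the conclusion follows from Gaussian hypercontractivity without further computation.
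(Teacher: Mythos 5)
Your proof is correct and is precisely the argument the paper gives informally in the paragraph preceding the corollary: $I^\alpha(B)$ with $\lVert\alpha\rVert\le m$ lies in $\overline{P_m(H)}$ because it is an $L^2$-limit of polynomials in Brownian increments, and Theorem~\ref{thm:hc-moment} with $p=3$ and $n=m$ gives $(3-1)^{m/2}=2^{m/2}$. Your bookkeeping showing the chaos degree is at most $k_1\le 2k_0+k_1=\lVert\alpha\rVert\le m$ just makes the paper's one-sentence justification precise.
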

As the bound is independent of the dimension $d$ of the underlying Brownian motion,
we have the following version of Theorem \ref{thm:informal}
by combining it with Theorem \ref{hykw-moment}
as follows:
\begin{cor}
    Let $d, m\ge1$ be integers and $B, B_1, B_2,\ldots$ be independent standard $d$-dimensional Brownian motions.
    Then, if $\bm\phi(B)$ is a $D$-dimensional random vector such that each coordinate
    is given by a linear combination of $(I^\alpha(B))_{\lVert\alpha\rVert\le m}$,
    then we have
    \[
        \P{\E{\bm\phi(B)}\in\cv\{\bm\phi(B_1), \ldots, \bm\phi(B_N)\}} \ge\frac12
    \]
    for all integers $N\ge 17(1+18\cdot8^{m-1})D$.
\end{cor}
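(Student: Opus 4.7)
The plan is to reduce the statement to a direct application of Theorem~\ref{hykw-moment}, using the preceding corollary as the required Khintchin-type moment bound. By Theorem~\ref{hykw-moment}, it suffices to exhibit a constant $K>0$ such that
\[
    \lVert c^\top(\bm\phi(B) - \E{\bm\phi(B)}) \rVert_{L^3}
    \le K \lVert c^\top(\bm\phi(B) - \E{\bm\phi(B)}) \rVert_{L^2}
\]
holds for every $c\in\R^D$; then the bound $N_{\bm\phi(B)}(\E{\bm\phi(B)}) \le 17(1+9K^6/4)D$ is immediate, and it only remains to identify $K$ and simplify the resulting constant.

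First I would observe that, by the hypothesis on $\bm\phi$ and linearity, $c^\top \bm\phi(B)$ is itself of the form $\sum_{\lVert\alpha\rVert\le m} c_\alpha I^\alpha(B)$ for some real coefficients $c_\alpha$. Centering only modifies the coefficient of the identity/pure-time iterated integrals (equivalently, removes the projection onto the $0$-th Wiener chaos), so $c^\top(\bm\phi(B)-\E{\bm\phi(B)})$ is again a linear combination of $(I^\alpha(B))_{\lVert\alpha\rVert\le m}$. The preceding corollary (i.e.\ the Wiener-chaos hypercontractivity from Theorem~\ref{thm:hc-moment} at $p=3$) therefore applies to this centered variable and yields the required inequality with $K = 2^{m/2}$.

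Finally I would substitute $K = 2^{m/2}$ into Theorem~\ref{hykw-moment}: since $K^6 = 2^{3m} = 8^m$, we have
\[
    \frac{9K^6}{4} = \frac{9\cdot 8^m}{4} = 18\cdot 8^{m-1},
\]
so $N_{\bm\phi(B)}(\E{\bm\phi(B)}) \le 17(1+18\cdot 8^{m-1})D$, which is exactly the claimed threshold for $N$. The heavy lifting has already been done upstream, by the Wiener-chaos hypercontractivity (giving a $d$-independent $K$) and by the Tukey-depth/Berry--Esseen machinery of Theorem~\ref{hykw-moment}; the only mild point to be careful about is that centering does not push us outside the class of degree-$\le m$ iterated integrals, which is essentially a tautology once one unwraps the definition of $\lVert\alpha\rVert$.
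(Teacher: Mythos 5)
Your proposal is correct and matches the paper's intended argument: apply the preceding corollary (Wiener-chaos hypercontractivity at $p=3$ from Theorem~\ref{thm:hc-moment}) to the centered scalar $c^\top(\bm\phi(B)-\E{\bm\phi(B)})$, which stays in the span of iterated integrals with $\lVert\alpha\rVert\le m$ (for $m=1$ the mean is already zero; for $m\ge2$ constants are absorbed by the $I^{(0)}$ term), obtaining $K=2^{m/2}$, and then plug into Theorem~\ref{hykw-moment} with $K^6=8^m$ to get the stated threshold $17(1+18\cdot 8^{m-1})D$.
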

The above allows to choose the number of candidate paths that need to be sampled. 
However, as mentioned above, one challenge that is specific to cubature on pathspace is that one cannot sample a true Brownian trajectory which leads to an additional discretization error. 
However, we conjecture that the number of random samples divided by $D$ and the number of time partitions for piecewise linear approximation in constructing cubature on Wiener space can be independent of the underlying dimension $d$.
\begin{rem}
One can also apply these estimates to fractional Brownian motion \citep{pas16},
though we also need to obtain the exact expectations
of iterated integrals of fractional Brownian motion
(we can find some results on the Ito-type iterated integrals
without the time integral by $B^0_t=t$ in the literature \citep[][Theorem 31]{bau07}).
\end{rem}

\subsection{Kernel Quadrature for Product Measures}\label{sec:app-rkhs}
Let $\X$ be a topological space
and $k:\X\times\X\to\R$ be a positive definite kernel
with the reproducing kernel Hilbert space (RKHS) $\Hil_k$ \citep{ber04}.
A {\it kernel quadrature} for a random variable $X$
or equivalently a Borel probability measure $\mu$ (i.e., $X\sim\mu$) on $\X$ is a cubature formula for $(\Hil_k, \mu)$; that is, a set of points $x_i \in \X$ and
weights $w_i \in \R$ such that $\tilde \mu_n = \sum w_i \delta_{x_i}$ minimizes 
worst-case error
\begin{equation}
    \wce(\tilde{\mu}_n; \Hil_k, \mu)\coloneqq
    \sup_{\lVert f\rVert_{\Hil_k} \le 1}\left\lvert
        \E{f(X)} - \sum_{i=1}^n w_i f(x_i)
    \right\rvert,
    \label{eq:wce}
\end{equation}
which we might just denote by $\wce(\tilde{\mu}_n)$,
has been widely studied from
the viewpoint of optimization \citep{che10,bac12,hus12}
as well as sampling \citep{bac17,bel19,hayakawa21b}.
\paragraph{Tensor Product Kernels.}
When there are $d$ pairs of space and kernel
$(\X_1, k_1), \ldots, (\X_d, k_d)$,
the {\it tensor product kernel}
on the product space $\X_1\times\cdots\times\X_d$
is defined as
\[
    (k_1\otimes\cdots\otimes k_d)(x, y) := \prod_{i=1}^d k_i(x_i, y_i),
    \quad 
    x = (x_1, \ldots, x_d),
    y = (y_1, \ldots, y_d)\in \X_1\times\cdots\times\X_d.
\]
This is indeed the reproducing kernel of
the tensor product $\Hil_{k_1}\otimes\cdots\otimes\Hil_{k_d}$
in terms of RKHS \citep{ber04}.
The most important example of this construction is when the underlying $d$ kernels are the same,
$k^{\otimes d}=k\otimes\cdots\otimes k$.
Given a probability measure
$\mu$ in the (conceptually univariate) space
$\X$, constructing a kernel quadrature for $\mu^{\otimes d}$ with respect to $k^{\otimes d}$ is a natural multivariate extension of kernel quadrature that is widely studied in the literature \citep{oha91,kan16,bac17,kar19}, and corresponds to high-dimensional QMCs \citep{dic13}.
\paragraph{Mercer Expansions and Quadrature.}
The convergence rate of $\wce(\tilde{\mu}_n)$
is typically described by using
the Mercer expansion:
\begin{equation}
    k(x, y) = \sum_{\ell=1}^\infty \sigma_{\ell} e_{\ell}(x)e_{\ell}(y),
    \label{eq:mercer-exp}
\end{equation}
where $(\sigma_{\ell}, e_{\ell})_{\ell=1}^\infty$
are eigenpairs of
the integral operator
$\K: f \mapsto \int_\X k(\cdot, y)f(y)\dd\mu(y)$
in $L^2(\mu)$
with $\sigma_1\ge\sigma_2\ge\cdots\ge 0$
and $\lVert e_{\ell}\rVert_{L^2(\mu)} = 1$.

\begin{assp}\label{assp:ker}
    The kernel $k$ satisfies that
    the expansion \eqref{eq:mercer-exp} converges
    pointwise,
    $\sum_{\ell=1}^\infty\sigma_{\ell}<\infty$,
    and $(\sqrt{\sigma_{\ell}}e_{\ell})_{\ell=1}^\infty$ is an
    orthonormal basis of $\Hil_k$.
\end{assp}
Mild conditions already imply that Assumption \ref{assp:ker} applies, e.g., 
$\supp \mu = \X$, $k$ is continuous, and $x\mapsto k(x, x)$ is in $L^1(\mu)$ is sufficent, see \citep{ste12}.
Under this assumption, an $n$-point kernel quadrature rule that exactly integrates
the first $n-1$ eigenfunctions
satisfies the following theoretical guarantee:
\begin{prop}[\citep{hayakawa21b}]
    Under Assumption \ref{assp:ker},
    let $\tilde{\mu}_n = (w_i, x_i)_{i=1}^n$ be a kernel quadrature
    with convex weights satisfying
    $\int_\X e_{\ell}(x)\dd\mu(x) = \sum_{i=1}^nw_ie_{\ell}(x_i)$
    for each $\ell = 1, \ldots, n-1$.
    Then, by letting 
    $r_n(x):=\sum_{m=n}^\infty\sigma_me_m(x)^2$,
    we have
    $\wce(\tilde{\mu}_n)^2 \le 4\sup_{x\in\X}r_n(x)$.
\end{prop}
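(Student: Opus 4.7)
The plan is to expand any candidate function $f$ in the Mercer basis $(\sqrt{\sigma_\ell} e_\ell)_{\ell \ge 1}$, split the integration error into terms that vanish by the exactness assumption and a tail that can be controlled via Cauchy--Schwarz. More precisely, I would take $f \in \Hil_k$ with $\lVert f\rVert_{\Hil_k}\le 1$ and write $f = \sum_\ell c_\ell \sqrt{\sigma_\ell}\, e_\ell$ where $\sum_\ell c_\ell^2 \le 1$ by Assumption \ref{assp:ker}. Setting $\nu = \tilde{\mu}_n - \mu$ (a signed measure) and $d_\ell := \int e_\ell \dd \nu$, the hypothesis says $d_\ell = 0$ for $\ell \le n-1$. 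A first Cauchy--Schwarz over $\ell$ gives
\[
\wce(\tilde{\mu}_n)^2 \le \sum_{\ell \ge n} \sigma_\ell\, d_\ell^2.
\]

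Next, I would apply $(a-b)^2 \le 2a^2 + 2b^2$ to split $d_\ell^2 \le 2\bigl(\int e_\ell \dd\mu\bigr)^2 + 2\bigl(\sum_i w_i e_\ell(x_i)\bigr)^2$, so that the task reduces to bounding each of
\[
S_1 := \sum_{\ell \ge n} \sigma_\ell \left(\int e_\ell \dd\mu\right)^2,
\qquad
S_2 := \sum_{\ell \ge n} \sigma_\ell \left(\sum_{i=1}^n w_i e_\ell(x_i)\right)^2
\]
by $\sup_x r_n(x)$. The main trick (and the main obstacle) is to recognise the truncated kernel $\tilde{k}_n(x,y) := \sum_{\ell \ge n} \sigma_\ell e_\ell(x) e_\ell(y)$ as positive semidefinite with diagonal exactly $r_n(x)$, hence $|\tilde{k}_n(x,y)| \le \sqrt{r_n(x) r_n(y)}$ by the reproducing-kernel Cauchy--Schwarz inequality. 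Convergence and interchange of sum and integrals for $\tilde{k}_n$ is justified by $\sum_\ell \sigma_\ell < \infty$ and Assumption \ref{assp:ker}.

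Using this, $S_1 = \iint \tilde{k}_n(x,y) \dd\mu(x)\dd\mu(y) \le \bigl(\int \sqrt{r_n(x)}\dd\mu(x)\bigr)^2 \le \int r_n(x) \dd\mu(x) \le \sup_x r_n(x)$, where the second step is Jensen. Analogously, $S_2 = \sum_{i,j} w_i w_j \tilde{k}_n(x_i, x_j) \le \bigl(\sum_i w_i \sqrt{r_n(x_i)}\bigr)^2 \le \sup_x r_n(x)$, where we used $w_i \ge 0$ and $\sum_i w_i = 1$ (convex weights). Combining yields $\wce(\tilde{\mu}_n)^2 \le 2 S_1 + 2 S_2 \le 4 \sup_{x\in\X} r_n(x)$, as claimed.

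The only genuinely nontrivial ingredient is the pointwise bound $|\tilde{k}_n(x,y)|\le \sqrt{r_n(x)r_n(y)}$ on the truncated tail kernel; once this is in hand the two terms collapse to integrals of the scalar function $r_n$, after which Jensen and $\sum w_i = 1$ finish the argument. Everything else is a routine orthonormal-basis expansion plus two applications of Cauchy--Schwarz.
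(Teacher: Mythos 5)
Your proof is correct, and since this proposition is cited to \citep{hayakawa21b} the paper itself contains no proof to compare against; your job is therefore to reconstruct one, which you have done successfully. The overall skeleton---expand $f$ in the Mercer basis, use Cauchy--Schwarz over $\ell$ to reduce $\wce^2$ to $\sum_{\ell\ge n}\sigma_\ell d_\ell^2$, split $d_\ell^2\le 2(\int e_\ell\dd\mu)^2+2(\sum_i w_i e_\ell(x_i))^2$, and then show each half is at most $\sup_x r_n(x)$---is exactly the right strategy, and all your inequalities check out (the kernel Cauchy--Schwarz $\lvert\tilde{k}_n(x,y)\rvert\le\sqrt{r_n(x)r_n(y)}$ is valid because $\tilde k_n$ is the tail of a positive semidefinite kernel, and the Fubini interchange is justified dominating by $\sqrt{r_n(x)r_n(y)}$ whose double integral equals $\sum_{\ell\ge n}\sigma_\ell<\infty$).

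That said, the tail-kernel detour is more machinery than you need. Once you have $S_1=\sum_{\ell\ge n}\sigma_\ell(\int e_\ell\dd\mu)^2$ and $S_2=\sum_{\ell\ge n}\sigma_\ell(\sum_i w_i e_\ell(x_i))^2$, you can bound each term directly by Jensen: $(\int e_\ell\dd\mu)^2\le\int e_\ell^2\dd\mu$ since $\mu$ is a probability measure, and $(\sum_i w_i e_\ell(x_i))^2\le\sum_i w_i e_\ell(x_i)^2$ since $(w_i)$ are convex weights. Summing over $\ell\ge n$ and interchanging (Tonelli, nonnegative terms) gives $S_1\le\int r_n\dd\mu\le\sup_x r_n(x)$ and $S_2\le\sum_i w_i r_n(x_i)\le\sup_x r_n(x)$ immediately, without ever introducing $\tilde{k}_n$ or its reproducing-kernel inequality. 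This buys you a shorter argument and avoids having to justify the double-integral manipulations, while losing nothing in generality.

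One more small remark: your initial Cauchy--Schwarz gives $\wce(\tilde\mu_n)^2\le\sum_{\ell\ge n}\sigma_\ell d_\ell^2$, but this is in fact an \emph{equality}, since $\wce(\tilde\mu_n)^2=\iint k\dd\nu\dd\nu=\sum_\ell\sigma_\ell d_\ell^2$ by the MMD formula. Using the equality would make the presentation a touch cleaner, but the inequality you stated is all that is needed.
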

We have more favorable bounds on $\wce(\tilde{\mu}_n)$
by assuming more, but the important fact here
is that the event \eqref{eq:rcv}
for a vector-valued $\bm\phi$ given by eigenfunctions
$e_1,\ldots,e_{n-1}$ enables us to construct
an interesting numerical scheme.
A similar approach, specialized to a Gaussian kernel over a Gaussian measure
can be found in \citep{kar19}.
As the construction of such $\tilde{\mu}_n$
for general $k$ and $\mu$ relies on random sampling,
we want to estimate
$N_{\bm\phi(X)}(\E{\bm\phi(X)})$
for $X\sim\mu$ and
$\bm\phi = (e_1, \ldots, e_{n-1})$.
\paragraph{From RKHS to GRP.}
To make it compatible with the framework of GRPs
introduced in the previous section,
we further assume the following condition,
which ensures that the kernel is in an appropriate scaling.
\begin{asspp}\label{asp:grp}
    The kernel $k$ satisfies
    Assumption \ref{assp:ker},
    $\sigma_1 \le 1$,
    and the strict inequality
    $\sigma_\ell < 1$ holds if $e_\ell\in L^2(\mu)$
    is not constant.
\end{asspp}

Under Assumption \ref{asp:grp},
we can naturally define a GRP $G = (Y, Q, \lambda)$
with
$Y$ following $\mu$,
$Q_m = \mathop\mathrm{span}\{1, e_1, \ldots, e_m\}$
and $\lambda_m = \sigma_m$ for $m\ge1$.
Note that it violates the condition $\lambda_1 < 1$
if $\sigma_1 = 1$ and $e_1$ is constant, but in that case
we can simply decrement all the indices of $(Q_m, \lambda_m)$ by one.
We call it the {\it natural GRP} for $k$ and $\mu$
and denote it by $G = G_{k, \mu}$.

\begin{rem}\label{rem:scaling}
    The scaling given in Assumption \ref{asp:grp}
    is essential to the hypercontractivity
    under the framework of tensor product kernels
    when considering ``eigenspace down to some eigenvalue.''
    Indeed, if $\sigma_\ell \ge 1$
    for some nonconstant eigenfunction $e_\ell$,
    we have, for $p>2$,
    \[
        \frac{\lVert e_\ell^{\otimes d} \rVert_{L^p(\mu^{\otimes d})}}{
            \lVert e_\ell^{\otimes d} \rVert_{L^2(\mu^{\otimes d})}
        }
        =
        \left(
            \frac{\lVert e_\ell \rVert_{L^p(\mu)}}{
            \lVert e_\ell \rVert_{L^2(\mu)}
        }
        \right)^d
    \]
    which increases exponentially as $d$ grows,
    whereas the corresponding eigenvalue is lower bounded by $1$.
    So the hypercontractivity in our sense never gets satisfied if $\sigma_\ell\ge 1$
    for a nonconstant $e_\ell$.
\end{rem}

The following
statement, written without GRPs,
is what we can prove by using the hypercontractivity of GRPs.
\begin{prop}
    Let $k$ satisfy Assumption \ref{asp:grp}
    and $Y_1, Y_2, \ldots$ independently follow $\mu$.
    For each $\delta > 0$,
    define a set of random variables as
    \[
        S(\delta):=\mathop\mathrm{span}
        (\{1\}\cup
        \{ e_{\ell_1}(Y_{m_1})\cdots e_{\ell_k}(Y_{m_k})
        \mid k\ge1,\, 
        m_1 < \cdots < m_k,\,
        \sigma_{\ell_1}\cdots\sigma_{\ell_k}\ge \delta\}).
    \]
    Then, if
    $\lVert e_\ell(Y_1)\rVert_{L^4}<\infty$ holds
    for all $\ell$ with $\sigma_\ell\ge\delta$,
    then
    there is a constant $C = C(\delta)>0$ such that
    $\lVert X\rVert_{L^4}\le C \lVert X\rVert_{L^2}$
    for all $X\in S(\delta)$.
\end{prop}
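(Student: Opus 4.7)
My plan is to realize $S(\delta)$ as a union over $d$ of finite-degree subspaces of the product GRP $G^{\otimes d}$, where $G = G_{k,\mu}$ is the natural GRP of $k$ and $\mu$, and then apply Theorem~\ref{thm:grp-main}, whose constant depends only on $G$ and the degree bound and not on $d$.

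First I would set up $G = (Y, Q, \lambda)$ exactly as described in the paper: $Y \sim \mu$, $Q_m = \mathop\mathrm{span}\{1, e_1, \ldots, e_m\}$, and $\lambda_m = \sigma_m$ (applying the index shift noted after Assumption~\ref{asp:grp} if $\sigma_1 = 1$ with $e_1$ constant). Assumption~\ref{asp:grp} together with $\sum_\ell \sigma_\ell < \infty$ guarantees the defining conditions of a GRP and forces $\sigma_\ell \to 0$, so only finitely many eigenvalues exceed any fixed $\delta > 0$; let $m^*$ be the largest index with $\sigma_{m^*} \ge \delta$. Then $\{X \in L^2(\mu) : \tilde{\deg}_G X \le 1/\delta\}$ coincides with $\overline{Q_{m^*}(Y)} = \mathop\mathrm{span}\{1, e_1(Y), \ldots, e_{m^*}(Y)\}$, which is finite-dimensional and contained in $L^4$ by the hypothesized $L^4$-integrability of the $e_\ell(Y)$ with $\sigma_\ell \ge \delta$. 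This verifies the hypothesis of Theorem~\ref{thm:grp-main} with $K = 1/\delta$.

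Next I would take any $X \in S(\delta)$. Since $X$ is a finite linear combination, it involves only finitely many of the $Y_m$'s; relabelling, I may assume these are $Y_1, \ldots, Y_d$ for some $d$ and view $X$ as an element of $L^2(\mu^{\otimes d})$. Each basis term $e_{\ell_1}(Y_{m_1}) \cdots e_{\ell_k}(Y_{m_k})$ with $m_1 < \cdots < m_k \le d$ is the tensor product $\prod_{i=1}^d f_i(Y_i)$ where $f_{m_j} = e_{\ell_j}$ and $f_i = 1 \in Q_0^{(i)}$ for $i \notin \{m_1, \ldots, m_k\}$. The associated product-GRP eigenvalue is $\prod_{j=1}^k \sigma_{\ell_j} \ge \delta$, since the padding factors contribute $\lambda_0 = 1$. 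Hence this term lies in $Q^{G^{\otimes d}}_m(Y)$ for the largest $m$ with $\lambda^{G^{\otimes d}}_m \ge \delta$, and by linearity $\tilde{\deg}_{G^{\otimes d}} X \le 1/\delta$.

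Finally, Theorem~\ref{thm:grp-main} applied to $G$ with $K = 1/\delta$ delivers a constant $C = C(G, 1/\delta) = C(\delta)$, independent of $d$, such that $\lVert X \rVert_{L^4} \le C \lVert X \rVert_{L^2}$, completing the proof. The only subtle point is in the identification step: one must carefully match the multiplicative constraint $\sigma_{\ell_1} \cdots \sigma_{\ell_k} \ge \delta$ with the tensor-product degree filtration of $G^{\otimes d}$ and verify that padding with $\lambda_0 = 1$ factors for the absent variables preserves the bound uniformly in $d$. Once this book-keeping is in place, the conclusion is an immediate consequence of the machinery of Section~\ref{sec:gen-hc}.
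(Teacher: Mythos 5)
Your proof is correct and takes essentially the same route as the paper: invoke the natural GRP $G_{k,\mu}$, observe that $\sum_\ell \sigma_\ell < \infty$ forces finiteness of the eigenspace above $\delta$, and apply Theorem~\ref{thm:grp-main} with $K = 1/\delta$. The paper's own proof is a two-sentence remark that stops at exactly this point; you have supplied the bookkeeping it elides, in particular the identification of $S(\delta)$ with $\{X : \tilde{\deg}_{G^{\otimes d}} X \le 1/\delta\}$ by padding the absent coordinates with $\lambda_0 = 1$ factors, which is the honest core of the reduction.
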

\begin{proof}
    The finiteness of the dimension of eigenspace for $Y_1$, i.e,
    the finiteness of $\ell$ satisfying $\sigma_\ell\ge\delta$
    follows from $\sum_{\ell=1}^\infty\sigma_\ell<\infty$ in Assumption \ref{assp:ker}.
    Thus, Theorem \ref{thm:grp-main} gives the conclusion.
\end{proof}
This assertion, of course, includes a hypercontractivity statement
for an eigenspace of $k^{\otimes d}$ and $\mu^{\otimes d}$
for a fixed $d$,
but we can go further to a quantitative statement
by imposing another assumption.

\begin{assp}\label{assp:ortho}
    The kernel $k$ can be written as $k = 1 + k_0$,
    where $k_0:\X\times\X\to\R$ is a positive definite kernel
    satisfying $\int_\X k_0(x, y)\dd\mu(y) = 0$ for ($\mu$-almost) all
    $x\in\X$.
\end{assp}
Under Assumption \ref{assp:ker},
this is simply equivalent to $e_1$ being constant.
This assumption might seem artificial,
but naturally arises in the following situations:
\begin{itemize}
    \item[(a)]
        $\X$ is a compact group
        and $\mu$ is its Haar measure.
        $k$ is a positive definite kernel
        given as $k(x, y) = g(x^{-1}y)$,
        where $g:\X\to\R_{\ge0}$ and $\int_\X g(x)\dd\mu(x) = 1$.
    \item[(b)]
        $k_0$ is a kernel called Stein kernel \citep{oat17,ana21}
        with appropriate scaling.
\end{itemize}
One theoretically sufficient condition
for these assumptions
can be described as follows:
\begin{prop}\label{prop:asp-suff}
    Let $\X$ be compact metrizable and path-connected,
    $\supp\mu = \X$,
    and $k$ be continuous and nonnegative.
    If $\int_\X k(x, y)\dd\mu(y)=1$ holds for all $x\in \X$,
    Assumption \ref{asp:grp} and \ref{assp:ortho} hold.
\end{prop}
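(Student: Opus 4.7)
The plan is to verify Assumption \ref{assp:ker} (which is built into \ref{asp:grp}) via Mercer's theorem, then pin down the top eigenstructure by combining the hypothesis $\int_\X k(x,y)\dd\mu(y) = 1$ with a maximum-principle argument that crucially uses path-connectedness, and finally to read off Assumption \ref{assp:ortho} from the first term of the Mercer series.

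First I would invoke Mercer's theorem: since $\X$ is compact metrizable, $k$ is continuous, and $\mu$ is a finite Borel measure of full support, the integral operator $\K$ is compact and self-adjoint, the eigenexpansion $k(x,y) = \sum_{\ell\ge 1}\sigma_\ell e_\ell(x)e_\ell(y)$ converges pointwise (in fact uniformly), $\sum_\ell\sigma_\ell = \int_\X k(x,x)\dd\mu(x) < \infty$ by boundedness of $k$ on $\X\times\X$, and $(\sqrt{\sigma_\ell}\,e_\ell)$ is an orthonormal basis of $\Hil_k$. The hypothesis then says $\K 1 = 1$, and since $\mu(\X) = 1$ the constant function is already $L^2$-normalized, so $\sigma = 1$ is an eigenvalue. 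To get $\sigma_1 \le 1$, I would note that $\|\K\|_{L^2\to L^2}\le 1$ by Cauchy--Schwarz applied to $k(x,y)f(y) = \sqrt{k(x,y)}\cdot\sqrt{k(x,y)}\,f(y)$, using $k\ge 0$ together with $\int k(x,y)\dd\mu(y) = \int k(y,x)\dd\mu(y) = 1$.

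The crucial step is to show that every $L^2$-eigenfunction $f$ of $\K$ with eigenvalue $1$ is constant, so that non-constant $e_\ell$ must satisfy $\sigma_\ell<1$. Dominated convergence in $f(x) = \int k(x,y)f(y)\dd\mu(y)$ makes $f$ continuous; let $M := \max_\X f$ (attained by compactness) and $E := f^{-1}(M)$. For $x_0 \in E$, the identity $\int k(x_0,y)(M-f(y))\dd\mu(y) = 0$ combined with $k,\,M-f\ge 0$ forces $f = M$ $\mu$-a.e.~on $V := \{y: k(x_0,y)>0\}$. Now $k(x_0,x_0)>0$, since otherwise positive semi-definiteness would give $k(x_0,\cdot)\equiv 0$, contradicting $\int k(x_0,y)\dd\mu(y) = 1$; thus $V$ is an open neighborhood of $x_0$, and $\supp\mu = \X$ together with continuity of $f$ promotes the a.e.~identity to $f\equiv M$ on $V$. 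Hence $E$ is clopen and non-empty, and path-connectedness of $\X$ forces $E = \X$, so $f$ is constant. This completes Assumption \ref{asp:grp}.

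Finally, the previous step identifies $e_1 \equiv 1$ with $\sigma_1 = 1$, and setting $k_0 := k - 1 = \sum_{\ell\ge 2}\sigma_\ell e_\ell(x)e_\ell(y)$ gives a Mercer series with non-negative coefficients, hence a positive semi-definite kernel, while $\int_\X k_0(x,y)\dd\mu(y) = \sum_{\ell\ge 2}\sigma_\ell e_\ell(x)\langle e_\ell, 1\rangle_{L^2(\mu)} = 0$ by orthogonality of the eigenfunctions to the constant eigenfunction. The main obstacle is the maximum-principle step: upgrading a $\mu$-a.e.~equality on $V$ to an honest equality on an open set, and then globalising via a clopen set, requires non-negativity of $k$, strict positivity $k(x_0,x_0)>0$, full support of $\mu$, continuity of $f$, and path-connectedness of $\X$ all to cooperate.
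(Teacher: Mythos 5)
Your proof is correct, and it shares the paper's pivotal observation while completing the argument by a somewhat different route. Both proofs exploit that if $x_0$ realizes $M := \max_\X f$ for a continuous eigenfunction $f$ of eigenvalue $1$, then $\int_\X k(x_0,y)(M-f(y))\dd\mu(y)=0$ forces $k(x_0,\cdot)$ to vanish $\mu$-a.e.\ off the level set $E=f^{-1}(M)$, with continuity of $f$ obtained from dominated convergence. After that the two routes diverge. You make $E$ clopen: positive semi-definiteness gives $k(x_0,x_0)>0$ (else $|k(x_0,y)|^2\le k(x_0,x_0)k(y,y)$ would force $k(x_0,\cdot)\equiv0$, contradicting $\int_\X k(x_0,y)\dd\mu(y)=1$), so $V:=\{k(x_0,\cdot)>0\}$ is an open neighborhood of $x_0$ on which full support of $\mu$ plus continuity of $f$ upgrade the a.e.\ equality to $f\equiv M$, and connectedness then gives $E=\X$. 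The paper instead produces a quantitative gap: using the symmetric vanishing of $k$ between $E$ and its complement, the bound $k\le K$, and continuity from above of $\mu$ to pick $\varepsilon$ with $\mu\bigl(f^{-1}([F-\varepsilon,F))\bigr)\le 1/(2K)$, it shows $f\le F-\varepsilon/2$ off $E$, so $f(\X)$ would be a disconnected subset of $\R$ unless $E=\X$. Your clopen argument is the cleaner textbook strong-maximum-principle form and trades the $\varepsilon$-selection for one use of positive semi-definiteness, which the paper's gap argument never needs; conversely, you dispose of $\sigma_1\le 1$ separately via a Cauchy--Schwarz bound on $\|\mathcal{K}\|_{L^2\to L^2}$, whereas the paper extracts $\lambda\le 1$ from inside the same maximum-principle computation. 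Both approaches are sound.
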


From this proposition, for instance,
an appropriately scaled exponential/Gaussian kernel
over the $n$-sphere with the uniform measure satisfies
Assumption \ref{asp:grp} and \ref{assp:ortho}.

Under these two assumptions,
the operator $T(G_{k, \mu})$
in terms of GRPs corresponds
to the integral operator
$\mathcal{K} : f\mapsto
\int_\X k(\cdot, y)f(y)\dd\mu(y)$,
so the situation becomes even simpler.
We can directly apply Proposition \ref{prop:prod-hc}
by replacing $\lambda$'s with $\sigma$'s,
but we also have the following sufficient conditions
for the hypercontractivity without explicitly
using the eigenvalue sequence.
In the following, $\lVert \mathcal{K}_0\rVert:=\sigma_2 < 1$
is the operator norm
of $\mathcal{K}_0:f\mapsto\int_\X
k_0(\cdot, y)f(y)\dd\mu(y)$ on $L^2(\mu)$,
and $\tr(\mathcal{K}_0):=\int_\X k_0(x, x)\dd\mu(x)$.
We may have the following quantitative condition for hypercontractivity.

\begin{prop}\label{prop:ker-hc}
    Let $k = 1 + k_0$ satisfy Assumption \ref{asp:grp} and \ref{assp:ortho}.
    When $\lVert\mathcal{K}_0\rVert > 0$,
    if $r,s\ge1$ satisfy
    \[
        \lVert \mathcal{K}_0\rVert^{-(r+s)} \ge 2,
        \quad
        \lVert \mathcal{K}_0\rVert^{-(r-1)} \ge \sqrt{3}\tr(\mathcal{K}_0),
        \quad
        \lVert \mathcal{K}_0\rVert^{-(s-1)} \ge \lVert k_0\rVert_{L^4(\mu\otimes\mu)},
    \]
    then $G_{k, \mu}$ is $(2, 4; r+s)$-hypercontractive.
    In particular, if we have
    $\sup_{x\in\X}\lvert k_0(x, x)\rvert \le 1/\sqrt{3}$,
    then $G_{k, \mu}$ is $(2, 4; 2)$-hypercontractive.
\end{prop}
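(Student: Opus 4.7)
The plan is to apply Proposition \ref{prop:prod-hc} to the natural GRP $G_{k,\mu}=(Y,Q,\lambda)$. Under Assumption \ref{assp:ortho} the constant function $1$ is an eigenfunction with eigenvalue $1$, so the re-indexing described just after the definition of $G_{k,\mu}$ gives $\lambda_m=\sigma_{m+1}$ and $H_m(Y)=\mathrm{span}\{e_{m+1}\}$ for $m\ge 1$ (with $H_0=\mathrm{span}\{1\}$). Setting $s_0 := s$ and $t := r+s$, the three hypotheses of Proposition \ref{prop:prod-hc} reduce to (i) $\|e_\ell\|_{L^4(\mu)}\le\sigma_\ell^{-s}$ for every $\ell\ge 2$, (ii) $\sum_{\ell\ge 2}\sigma_\ell^{r}\le 1/\sqrt 3$, and (iii) $\sigma_2^{r+s}\le 1/2$. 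Condition (iii) is exactly the first given hypothesis $\|\mathcal{K}_0\|^{-(r+s)}\ge 2$.

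The main technical step, which I expect to be the principal obstacle, is proving the per-level $L^4$ bound (i). From the eigenvalue equation $\sigma_\ell e_\ell(x) = (\mathcal{K}_0 e_\ell)(x) = \int k_0(x,y)\,e_\ell(y)\dd\mu(y)$, Cauchy--Schwarz against $e_\ell$ (noting $\|e_\ell\|_{L^2}=1$) gives
\[
   \sigma_\ell^2\, e_\ell(x)^2 \le \int k_0(x,y)^2\dd\mu(y).
\]
Squaring this inequality and integrating in $x$, then applying Jensen's inequality $(\int f\dd\mu)^2\le\int f^2\dd\mu$ with $f(y)=k_0(x,y)^2$, we obtain
\[
   \sigma_\ell^4\|e_\ell\|_{L^4(\mu)}^4 \le \int\!\!\int k_0(x,y)^4\dd\mu(y)\dd\mu(x) = \|k_0\|_{L^4(\mu\otimes\mu)}^4.
\]
Combined with $\sigma_\ell\le\sigma_2=\|\mathcal{K}_0\|$ for $\ell\ge 2$ and the third given hypothesis $\|k_0\|_{L^4}\le\|\mathcal{K}_0\|^{-(s-1)}$, this yields $\|e_\ell\|_{L^4}\le \sigma_\ell^{-1}\|\mathcal{K}_0\|^{-(s-1)}\le\sigma_\ell^{-s}$, which is (i). For (ii), the same bound $\sigma_\ell\le\|\mathcal{K}_0\|$ combined with $r\ge 1$ gives $\sum_{\ell\ge 2}\sigma_\ell^r \le \|\mathcal{K}_0\|^{r-1}\tr(\mathcal{K}_0)\le 1/\sqrt 3$ by the second given hypothesis. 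Proposition \ref{prop:prod-hc} then yields $(2,4;r+s)$-hypercontractivity of $G_{k,\mu}$.

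For the ``in particular'' claim I take $r = s = 1$ and check the three conditions from $\sup_x|k_0(x,x)|\le 1/\sqrt 3$. Since $k_0$ is positive semidefinite, $k_0(x,x)\ge 0$, so $\tr(\mathcal{K}_0)=\int k_0(x,x)\dd\mu\le 1/\sqrt 3$; together with the standard inequality $\|\mathcal{K}_0\|\le\tr(\mathcal{K}_0)$, this gives $\|\mathcal{K}_0\|^{-2}\ge 3\ge 2$ and $\sqrt 3\,\tr(\mathcal{K}_0)\le 1=\|\mathcal{K}_0\|^0$, which cover the first two conditions. For the third, the RKHS Cauchy--Schwarz inequality $k_0(x,y)^2 \le k_0(x,x)k_0(y,y)$ gives
\[
   \|k_0\|_{L^4(\mu\otimes\mu)}^4 \le \left(\int k_0(x,x)^2\dd\mu\right)^2 \le \bigl(\sup_x k_0(x,x)\bigr)^2\tr(\mathcal{K}_0)^2 \le 1/9,
\]
so $\|k_0\|_{L^4}\le 1/\sqrt 3\le 1=\|\mathcal{K}_0\|^0$, completing the verification.
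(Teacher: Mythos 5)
Your proof is correct and follows essentially the same route as the paper: reduce to Proposition~\ref{prop:prod-hc} via the reindexing $\lambda_m=\sigma_{m+1}$, $H_m(Y)=\mathop\mathrm{span}\{e_{m+1}\}$, and verify its three hypotheses from $\lVert\mathcal{K}_0\rVert=\sigma_2$, $\tr(\mathcal{K}_0)=\sum_{\ell\ge2}\sigma_\ell$, and $\lVert k_0\rVert_{L^4(\mu\otimes\mu)}$. The one local difference is your derivation of the per-eigenfunction bound $\lVert e_\ell\rVert_{L^4}\le\sigma_\ell^{-1}\lVert k_0\rVert_{L^4(\mu\otimes\mu)}$: the paper first proves the general operator estimate $\lVert\mathcal{K}_0 f\rVert_{L^p}\le\lVert k_0\rVert_{L^p(\mu\otimes\mu)}\lVert f\rVert_{L^2}$ by Minkowski's integral inequality followed by Cauchy--Schwarz and then specializes to $f=e_\ell$, while you apply Cauchy--Schwarz pointwise in the eigenvalue equation and then Jensen; for $p=4$ the two derivations are equivalent, and yours is slightly more elementary at the cost of generality in $p$. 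Your explicit verification of the ``in particular'' clause with $r=s=1$ --- using $k_0(x,x)\ge 0$, $\lVert\mathcal{K}_0\rVert\le\tr(\mathcal{K}_0)$, and the RKHS diagonal bound $k_0(x,y)^2\le k_0(x,x)k_0(y,y)$ --- is correct and fills in a step the paper leaves implicit.
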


\begin{Example}[Periodic Sobolev spaces over the torus.]
Following \citet{bac17}, we consider periodic kernels over $[0, 1]$.
Therefore let $\X = [0, 1]$, $\mu$ be the uniform distribution on $\X$,
and define
\begin{equation}
    k_{r, \delta}(x, y) = 1 + \delta\cdot\frac{(-1)^{r-1}(2\pi)^{2r}}{(2r)!}
    B_{2r}(\lvert x - y \rvert)
    \label{eq:sob}
\end{equation}
for each positive integer $s$
and $\delta\in(0, 1)$,
where $B_{2r}$ is the $2r$-th Bernoulli polynomial \citep{wah90}.
$\delta=1$ is assumed in the original definition, but
it violates Assumption \ref{asp:grp} (see also Remark \ref{rem:scaling}).
Albeit this slight modification,
the kernel $k_{r,\delta}$ gives an equivalent norm
to the periodic Sobolev space in the literature.
For $\delta\in(0, 1)$,
$k_{r, \delta}$ satisfies Assumption \ref{asp:grp} and \ref{assp:ortho}.
The eigenvalues and eigenfunctions with respect to the uniform measure
are known \citep{bac17}; the eigenvalues are:
$1$ for the constant function,
and $\delta m^{-2r}$ for $c_m(\cdot):=\sqrt2\cos(2\pi m\, \cdot)$
and $s_m(\cdot):=\sqrt2\sin(2\pi m\, \cdot)$ for
$m\ge 1,2,\ldots$.
We now apply Proposition \ref{prop:prod-hc} with (for sake of concreteness) $\delta = 1/3$.
This gives $\lVert c_m\rVert_{L^4(\mu)}
= \lVert s_m\rVert_{L^4(\mu)} =(3/2)^{1/4}$.
Thus, to satisfy the condition of Proposition \ref{prop:prod-hc},
it suffices for $s < t$ to satisfy
$3^s \ge (3/2)^{1/4}$, $\delta^{t-s}\zeta(2r(t-s))$,
$3^t \ge 2$,
where $\zeta$ is Riemann's zeta function.
Hence a simple numerical sufficient condition for this is $s=0.1$ and $t=1.1$ for $r=1$,
and $t=\log_32\le 0.631$ for $r\ge2$,
which can be derived by letting $2r(t-s)\ge 2$.
To sum up, in the case $r\ge2$,
we only need $\ord{\lambda^{-0.631}D}$ times of sampling
for meeting \eqref{eq:rcv} with probability over a half,
if $X\sim\mu^{\otimes d}$
and each coordinate of $\bm\phi:\X^d\to\R^D$
is in the eigenspace of the eigenvalue $\lambda$.
\end{Example}
\section{Concluding remarks}\label{sec:con}
We investigated the number of samples needed for the expectation vector to be contained in their convex hull from the viewpoint of product/graded structure.
We showed that the fact that we empirically only need $\ord{D}$ times
of sampling for the $D$-dimensional random vector
in practical examples
can partially be explained by the
hypercontractivity in the Gaussian case
as well as the generalized situation including random polynomials and
product kernels.
There are also interesting questions for further research; for example, although in the asymptotic $d\to\infty$ we established that the required number of sampling divided by $D$ is independent of $d$, the constants are larger than what purely empirical estimates given in \citep{hayakawa-MCCC,hayakawa21b} (where
$10D$ is sufficient in practice).
Another direction, is the case of cubature of Wiener space, as one cannot actually sample from Brownian motion and discretization errors propage to higher order $m$; an promising research direction could be to study ``approximate sampling'' or consider unbiased simulations \citep{hen17} for the iterated integrals.

\section*{Acknowlegments}
Harald Oberhauser and Terry Lyons are supported by
the DataSıg Program [EP/S026347/1], the Alan Turing Institute [EP/N510129/1], the Oxford-Man Institute, and the CIMDA collaboration by City University Hong Kong and the University of Oxford.

\bibliography{cite}

\providecommand{\noopsort}[1]{}
\begin{thebibliography}{64}
\providecommand{\natexlab}[1]{#1}
\providecommand{\url}[1]{\texttt{#1}}
\expandafter\ifx\csname urlstyle\endcsname\relax
  \providecommand{\doi}[1]{doi: #1}\else
  \providecommand{\doi}{doi: \begingroup \urlstyle{rm}\Url}\fi

\bibitem[An(1997)]{an97}
M.~Y. An.
\newblock Log-concave probability distributions: {T}heory and statistical
  testing.
\newblock \emph{Duke University Dept of Economics Working Paper}, 1997.

\bibitem[Anastasiou et~al.(2021)Anastasiou, Barp, Briol, Ebner, Gaunt,
  Ghaderinezhad, Gorham, Gretton, Ley, Liu, Mackey, Oates, Reinert, and
  Swan]{ana21}
A.~Anastasiou, A.~Barp, F.-X. Briol, B.~Ebner, R.~E. Gaunt, F.~Ghaderinezhad,
  J.~Gorham, A.~Gretton, C.~Ley, Q.~Liu, L.~Mackey, C.~J. Oates, G.~Reinert,
  and Y.~Swan.
\newblock Stein's method meets statistics: {A} review of some recent
  developments.
\newblock \emph{arXiv preprint arXiv:2105.03481}, 2021.

\bibitem[Bach(2017)]{bac17}
F.~Bach.
\newblock On the equivalence between kernel quadrature rules and random feature
  expansions.
\newblock \emph{The Journal of Machine Learning Research}, 18\penalty0
  (1):\penalty0 714--751, 2017.

\bibitem[Bach et~al.(2012)Bach, Lacoste-Julien, and Obozinski]{bac12}
F.~Bach, S.~Lacoste-Julien, and G.~Obozinski.
\newblock On the equivalence between herding and conditional gradient
  algorithms.
\newblock In \emph{International Conference on Machine Learning}, pages
  1355--1362, 2012.

\bibitem[Baudoin and Coutin(2007)]{bau07}
F.~Baudoin and L.~Coutin.
\newblock Operators associated with a stochastic differential equation driven
  by fractional {B}rownian motions.
\newblock \emph{Stochastic Processes and their Applications}, 117\penalty0
  (5):\penalty0 550--574, 2007.

\bibitem[Beckner(1975)]{bec75}
W.~Beckner.
\newblock Inequalities in {F}ourier analysis.
\newblock \emph{Annals of Mathematics}, 102\penalty0 (1):\penalty0 159--182,
  1975.

\bibitem[Beckner(1992)]{bec92}
W.~Beckner.
\newblock Sobolev inequalities, the {P}oisson semigroup, and analysis on the
  sphere $s^n$.
\newblock \emph{Proceedings of the National Academy of Sciences}, 89\penalty0
  (11):\penalty0 4816--4819, 1992.

\bibitem[Belhadji et~al.(2019)Belhadji, Bardenet, and Chainais]{bel19}
A.~Belhadji, R.~Bardenet, and P.~Chainais.
\newblock Kernel quadrature with {DPP}s.
\newblock In \emph{Advances in Neural Information Processing Systems},
  volume~32, pages 12907--12917, 2019.

\bibitem[Berlinet and Thomas-Agnan(2004)]{ber04}
A.~Berlinet and C.~Thomas-Agnan.
\newblock \emph{Reproducing kernel {H}ilbert spaces in probability and
  statistics}.
\newblock Springer, 2004.

\bibitem[Berry(1941)]{ber41}
A.~C. Berry.
\newblock The accuracy of the {G}aussian approximation to the sum of
  independent variates.
\newblock \emph{Transactions of the American Mathematical Society}, 49\penalty0
  (1):\penalty0 122--136, 1941.

\bibitem[Bonami(1970)]{bon70}
A.~Bonami.
\newblock Étude des coefficients de {F}ourier des fonctions de ${L}^p({G})$.
\newblock \emph{Annales de l'institut Fourier}, 20\penalty0 (2):\penalty0
  335--402, 1970.

\bibitem[Boyd et~al.(2004)Boyd, Boyd, and Vandenberghe]{boy04}
S.~Boyd, S.~P. Boyd, and L.~Vandenberghe.
\newblock \emph{Convex optimization}.
\newblock Cambridge University Press, 2004.

\bibitem[Caplin and Nalebuff(1991)]{cap91}
A.~Caplin and B.~Nalebuff.
\newblock Aggregation and social choice: {A} mean voter theorem.
\newblock \emph{Econometrica: Journal of the Econometric Society}, pages 1--23,
  1991.

\bibitem[Chen et~al.(2010)Chen, Welling, and Smola]{che10}
Y.~Chen, M.~Welling, and A.~Smola.
\newblock Super-samples from kernel herding.
\newblock In \emph{Conference on Uncertainty in Artificial Intelligence}, pages
  109--116, 2010.

\bibitem[Cosentino et~al.(2020)Cosentino, Oberhauser, and Abate]{cos20}
F.~Cosentino, H.~Oberhauser, and A.~Abate.
\newblock A randomized algorithm to reduce the support of discrete measures.
\newblock In \emph{Advances in Neural Information Processing Systems},
  volume~33, pages 15100--15110, 2020.

\bibitem[Cuesta-Albertos and Nieto-Reyes(2008)]{cue08}
J.~A. Cuesta-Albertos and A.~Nieto-Reyes.
\newblock The random {T}ukey depth.
\newblock \emph{Computational Statistics \& Data Analysis}, 52\penalty0
  (11):\penalty0 4979--4988, 2008.

\bibitem[di~Dio and Schmudgen(2018)]{Dio2018TheMT}
P.~J. di~Dio and K.~Schmudgen.
\newblock The multidimensional truncated moment problem: The moment cone.
\newblock \emph{arXiv: Functional Analysis}, 2018.

\bibitem[Dick et~al.(2013)Dick, Kuo, and Sloan]{dic13}
J.~Dick, F.~Y. Kuo, and I.~H. Sloan.
\newblock High-dimensional integration: {T}he quasi-{M}onte {C}arlo way.
\newblock \emph{Acta Numerica}, 22:\penalty0 133--288, 2013.

\bibitem[Eskenazis et~al.(2018)Eskenazis, Nayar, and Tkocz]{esk18}
A.~Eskenazis, P.~Nayar, and T.~Tkocz.
\newblock Sharp comparison of moments and the log-concave moment problem.
\newblock \emph{Advances in Mathematics}, 334:\penalty0 389--416, 2018.

\bibitem[Esseen(1942)]{ess42}
C.-G. Esseen.
\newblock On the {L}iapunoff limit of error in the theory of probability.
\newblock \emph{Arkiv for Matematik, Astronomi och Fysik, A: 1--19}, 1942.

\bibitem[Glaubitz(2021)]{gla21}
J.~Glaubitz.
\newblock Stable high-order cubature formulas for experimental data.
\newblock \emph{Journal of Computational Physics}, 447:\penalty0 110693, 2021.

\bibitem[Gr{\"u}nbaum(1960)]{gru60}
B.~Gr{\"u}nbaum.
\newblock Partitions of mass-distributions and of convex bodies by hyperplanes.
\newblock \emph{Pacific Journal of Mathematics}, 10\penalty0 (4):\penalty0
  1257--1261, 1960.

\bibitem[Hardy et~al.(1952)Hardy, Littlewood, and P\'{o}lya]{inequalities}
G.~H. Hardy, J.~E. Littlewood, and G.~P\'{o}lya.
\newblock \emph{Inequalities}.
\newblock Cambridge University Press, 1952.

\bibitem[Hayakawa(2021)]{hayakawa-MCCC}
S.~Hayakawa.
\newblock Monte {C}arlo cubature construction.
\newblock \emph{Japan Journal of Industrial and Applied Mathematics},
  38:\penalty0 561--577, 2021.

\bibitem[Hayakawa and Tanaka(2022)]{hayakawa-CoW}
S.~Hayakawa and K.~Tanaka.
\newblock Monte {C}arlo construction of cubature on {W}iener space.
\newblock \emph{Japan Journal of Industrial and Applied Mathematics},
  39\penalty0 (2):\penalty0 543--571, 2022.

\bibitem[Hayakawa et~al.(2021)Hayakawa, Lyons, and Oberhauser]{hayakawa21a}
S.~Hayakawa, T.~Lyons, and H.~Oberhauser.
\newblock Estimating the probability that a given vector is in the convex hull
  of a random sample.
\newblock \emph{arXiv preprint arXiv:2101.04250}, 2021.

\bibitem[Hayakawa et~al.(2022)Hayakawa, Oberhauser, and Lyons]{hayakawa21b}
S.~Hayakawa, H.~Oberhauser, and T.~Lyons.
\newblock Positively weighted kernel quadrature via subsampling.
\newblock In \emph{Advances in Neural Information Processing Systems}, 2022.
\newblock \doi{10.48550/arXiv.2107.09597}.

\bibitem[Henry-Labordere et~al.(2017)Henry-Labordere, Tan, and Touzi]{hen17}
P.~Henry-Labordere, X.~Tan, and N.~Touzi.
\newblock Unbiased simulation of stochastic differential equations.
\newblock \emph{The Annals of Applied Probability}, 27\penalty0 (6):\penalty0
  3305--3341, 2017.

\bibitem[Husz{\'a}r and Duvenaud(2012)]{hus12}
F.~Husz{\'a}r and D.~Duvenaud.
\newblock Optimally-weighted herding is {B}ayesian quadrature.
\newblock In \emph{Conference on Uncertainty in Artificial Intelligence}, pages
  377--386, 2012.

\bibitem[Janson(1997)]{GH-book}
S.~Janson.
\newblock \emph{Gaussian {H}ilbert spaces}.
\newblock Cambridge University Press, 1997.

\bibitem[Kabluchko and Zaporozhets(2020)]{kab20}
Z.~Kabluchko and D.~Zaporozhets.
\newblock Absorption probabilities for {G}aussian polytopes and regular
  spherical simplices.
\newblock \emph{Advances in Applied Probability}, 52\penalty0 (2):\penalty0
  588--–616, 2020.

\bibitem[Kanagawa et~al.(2016)Kanagawa, Sriperumbudur, and Fukumizu]{kan16}
M.~Kanagawa, B.~K. Sriperumbudur, and K.~Fukumizu.
\newblock Convergence guarantees for kernel-based quadrature rules in
  misspecified settings.
\newblock \emph{Advances in Neural Information Processing Systems},
  29:\penalty0 3296--3304, 2016.

\bibitem[Karvonen and S{\"a}rkk{\"a}(2019)]{kar19}
T.~Karvonen and S.~S{\"a}rkk{\"a}.
\newblock Gaussian kernel quadrature at scaled {G}auss--{H}ermite nodes.
\newblock \emph{BIT Numerical Mathematics}, 59\penalty0 (4):\penalty0 877--902,
  2019.

\bibitem[Kim and Vu(2000)]{kim00}
J.~H. Kim and V.~H. Vu.
\newblock Concentration of multivariate polynomials and its applications.
\newblock \emph{Combinatorica}, 20\penalty0 (3):\penalty0 417--434, 2000.

\bibitem[K\"{o}nig(2014)]{kon14}
H.~K\"{o}nig.
\newblock On the best constants in the {K}hintchine inequality for {S}teinhaus
  variables.
\newblock \emph{Israel Journal of Mathematics}, 203\penalty0 (1):\penalty0
  23--57, 2014.

\bibitem[Korolev and Shevtsova(2012)]{kor12}
V.~Korolev and I.~Shevtsova.
\newblock An improvement of the {B}erry-{E}sseen inequality with applications
  to {P}oisson and mixed {P}oisson random sums.
\newblock \emph{Scandinavian Actuarial Journal}, 2012\penalty0 (2):\penalty0
  81--105, 2012.

\bibitem[Litterer and Lyons(2012)]{lit12}
C.~Litterer and T.~Lyons.
\newblock High order recombination and an application to cubature on {W}iener
  space.
\newblock \emph{The Annals of Applied Probability}, 22\penalty0 (4):\penalty0
  1301--1327, 2012.

\bibitem[Lov{\'a}sz and Vempala(2007)]{lov07}
L.~Lov{\'a}sz and S.~Vempala.
\newblock The geometry of logconcave functions and sampling algorithms.
\newblock \emph{Random Structures \& Algorithms}, 30\penalty0 (3):\penalty0
  307--358, 2007.

\bibitem[Lyons and Victoir(2004)]{lyo04}
T.~Lyons and N.~Victoir.
\newblock Cubature on {W}iener space.
\newblock \emph{Proceedings of the Royal Society of London Series A},
  460:\penalty0 169--198, 2004.

\bibitem[Maalouf et~al.(2019)Maalouf, Jubran, and Feldman]{maa19}
A.~Maalouf, I.~Jubran, and D.~Feldman.
\newblock Fast and accurate least-mean-squares solvers.
\newblock In H.~Wallach, H.~Larochelle, A.~Beygelzimer, F.~d'Alch\'{e} Buc,
  E.~Fox, and R.~Garnett, editors, \emph{Advances in Neural Information
  Processing Systems}, volume~32, pages 8305--8316, 2019.

\bibitem[Mei et~al.(2021)Mei, Misiakiewicz, and Montanari]{mei21}
S.~Mei, T.~Misiakiewicz, and A.~Montanari.
\newblock Generalization error of random feature and kernel methods:
  hypercontractivity and kernel matrix concentration.
\newblock \emph{Applied and Computational Harmonic Analysis}, 2021.

\bibitem[Migliorati and Nobile(2022)]{mig22}
G.~Migliorati and F.~Nobile.
\newblock Stable high-order randomized cubature formulae in arbitrary
  dimension.
\newblock \emph{Journal of Approximation Theory}, page 105706, 2022.

\bibitem[Nelson(1973)]{nel73}
E.~Nelson.
\newblock The free {M}arkoff field.
\newblock \emph{Journal of Functional Analysis}, 12\penalty0 (2):\penalty0
  211--227, 1973.

\bibitem[Ninomiya and Shinozaki(2021)]{nin21}
S.~Ninomiya and Y.~Shinozaki.
\newblock On implementation of high-order recombination and its application to
  weak approximations of stochastic differential equations.
\newblock In \emph{Proceedings of the NFA 29th Annual Conference}, 2021.

\bibitem[Nourdin et~al.(2010)Nourdin, Peccati, and Reinert]{nou10}
I.~Nourdin, G.~Peccati, and G.~Reinert.
\newblock Invariance principles for homogeneous sums: universality of
  {G}aussian {W}iener chaos.
\newblock \emph{The Annals of Probability}, 38\penalty0 (5):\penalty0
  1947--1985, 2010.

\bibitem[Oates et~al.(2017)Oates, Girolami, and Chopin]{oat17}
C.~Oates, M.~Girolami, and N.~Chopin.
\newblock Control functionals for {M}onte {C}arlo integration.
\newblock \emph{Journal of the Royal Statistical Society. Series B: Statistical
  Methodology}, 79:\penalty0 695--718, 2017.

\bibitem[O'Hagan(1991)]{oha91}
A.~O'Hagan.
\newblock Bayes--{H}ermite quadrature.
\newblock \emph{Journal of Statistical Planning and Inference}, 29\penalty0
  (3):\penalty0 245--260, 1991.

\bibitem[Passeggeri(2016)]{pas16}
R.~Passeggeri.
\newblock Some results on the signature and cubature of the fractional
  {B}rownian motion for ${H}>\frac12$.
\newblock \emph{arXiv preprint arXiv:1609.07352}, 2016.

\bibitem[Richter(1957)]{richter1957parameterfreie}
H.~Richter.
\newblock Parameterfreie absch{\"a}tzung und realisierung von erwartungswerten.
\newblock \emph{Bl{\"a}tter der DGVFM}, 3\penalty0 (2):\penalty0 147--162,
  1957.

\bibitem[Rogosinski(1958)]{rogosinski1958moments}
W.~W. Rogosinski.
\newblock Moments of non-negative mass.
\newblock \emph{Proceedings of the Royal Society of London. Series A.
  Mathematical and Physical Sciences}, 245\penalty0 (1240):\penalty0 1--27,
  1958.

\bibitem[Rosenbloom(1952)]{rosenbloom1952quelques}
P.~C. Rosenbloom.
\newblock Quelques classes de probl{\`e}mes extr{\'e}maux. ii.
\newblock \emph{Bulletin de la societe mathematique de France}, 80:\penalty0
  183--215, 1952.

\bibitem[Rousseeuw and Ruts(1999)]{rou99}
P.~J. Rousseeuw and I.~Ruts.
\newblock The depth function of a population distribution.
\newblock \emph{Metrika}, 49\penalty0 (3):\penalty0 213--244, 1999.

\bibitem[Schudy and Sviridenko(2012)]{sch12}
W.~Schudy and M.~Sviridenko.
\newblock Concentration and moment inequalities for polynomials of independent
  random variables.
\newblock In \emph{Proceedings of the twenty-third annual ACM-SIAM symposium on
  Discrete Algorithms}, pages 437--446. SIAM, 2012.

\bibitem[Simon and H{\o}egh-Krohn(1972)]{sim72}
B.~Simon and R.~H{\o}egh-Krohn.
\newblock Hypercontractive semigroups and two dimensional self-coupled {B}ose
  fields.
\newblock \emph{Journal of Functional Analysis}, 9\penalty0 (2):\penalty0
  121--180, 1972.

\bibitem[Steinwart and Scovel(2012)]{ste12}
I.~Steinwart and C.~Scovel.
\newblock Mercer’s theorem on general domains: On the interaction between
  measures, kernels, and {RKHS}s.
\newblock \emph{Constructive Approximation}, 35\penalty0 (3):\penalty0
  363--417, 2012.

\bibitem[Stroud(1971)]{str71}
A.~H. Stroud.
\newblock \emph{Approximate calculation of multiple integrals}.
\newblock Prentice-Hall, 1971.

\bibitem[Tchakaloff(1957)]{tch57}
V.~Tchakaloff.
\newblock Formules de cubature m\'{e}canique \`{a} coefficients non
  n\'{e}gatifs.
\newblock \emph{Bulletin des Sciences Math\'{e}matiques}, 81:\penalty0
  123--134, 1957.

\bibitem[Tchernychova(2015)]{tch15}
M.~Tchernychova.
\newblock \emph{Carath{\'e}odory cubature measures}.
\newblock PhD thesis, University of Oxford, 2015.

\bibitem[Tukey(1975)]{tuk75}
J.~W. Tukey.
\newblock Mathematics and the picturing of data.
\newblock In \emph{Proceedings of the International Congress of Mathematicians,
  Vancouver, 1975}, volume~2, pages 523--531, 1975.

\bibitem[Wagner and Welzl(2001)]{wag01}
U.~Wagner and E.~Welzl.
\newblock A continuous analogue of the upper bound theorem.
\newblock \emph{Discrete \& Computational Geometry}, 26\penalty0 (2):\penalty0
  205--219, 2001.

\bibitem[Wahba(1990)]{wah90}
G.~Wahba.
\newblock \emph{Spline Models for Observational Data}.
\newblock Society for Industrial and Applied Mathematics, 1990.

\bibitem[Wald(1939)]{wald1939limits}
A.~Wald.
\newblock Limits of a distribution function determined by absolute moments and
  inequalities satisfied by absolute moments.
\newblock \emph{Transactions of the American Mathematical Society}, 46\penalty0
  (2):\penalty0 280--306, 1939.

\bibitem[Wendel(1963)]{wen62}
J.~G. Wendel.
\newblock A problem in geometric probability.
\newblock \emph{Mathematica Scandinavica}, 11\penalty0 (1):\penalty0 109--111,
  1963.

\bibitem[Zuo(2019)]{zuo19}
Y.~Zuo.
\newblock A new approach for the computation of halfspace depth in high
  dimensions.
\newblock \emph{Communications in Statistics-Simulation and Computation},
  48\penalty0 (3):\penalty0 900--921, 2019.

\end{thebibliography}
\bibliographystyle{abbrvnat}

\appendix

\section{Log-concave distributions}\label{sec:log-concave}
A function $f:\R^d \to \R_{\ge0}$
is called {\it log-concave}
if it satisfies
\[
    f(tx + (1-t)y) \ge f(x)^t f(y)^{1-t}
\]
for all $x,y\in\R^d$ and $t\in[0, 1]$.
A probability distribution with
a log-concave density is also called
log-concave,
and this class includes the multivariate Gaussian/exponential/Wishart distributions,
the uniform distribution over a convex domain,
and many more univariate common distributions
\citep{an97,boy04}.
For the log-concave random vectors,
the following result is known:

\begin{thm}[\citep{cap91}]
    If $X$ is a $d$-dimensional random vector
    with log-concave density,
    then we have $\alpha_X(\E{X}) \ge 1/e$.
\end{thm}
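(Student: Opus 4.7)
The plan is to reduce the $d$-dimensional statement to the classical one-dimensional Grünbaum inequality for log-concave densities on the line, and then to settle the latter by identifying the extremal densities.

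First, for any $c \in \R^d \setminus \{0\}$, consider the projection $Y := c^\top X$. By the Prékopa--Leindler inequality, marginals of log-concave densities are log-concave, so $Y$ admits a log-concave density on $\R$; moreover $\E{Y} = c^\top \E{X}$, so
\[\P{c^\top(X - \E{X}) \le 0} = \P{Y \le \E{Y}}.\]
Taking the infimum over $c$ reduces the theorem to the statement: if $g$ is any log-concave density on $\R$ with mean $m$, then $\P{Y \le m} \ge 1/e$. Translating, assume $m = 0$ and let $q = \P{Y \ge 0}$; it suffices to show $q \ge 1/e$, the complementary bound on $1 - q$ following by applying the same argument to $-Y$.

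Now let $T(t) := \P{Y \ge t}$ and $S(t) := \P{Y \le -t}$ on $[0, \infty)$. Both are log-concave and non-increasing (again Prékopa--Leindler, applied to the cumulative integrals of $g$), with $T(0) = q$ and $S(0) = 1 - q$. Integration by parts together with $\E{Y} = 0$ yields
\[\int_0^\infty T(t) \, dt = \int_0^\infty t\, g(t) \, dt = \int_0^\infty t\, g(-t) \, dt = \int_0^\infty S(t) \, dt.\]
The sharp lower bound $q \ge 1/e$ then follows from an extremal characterisation of log-concave non-increasing functions on $[0, \infty)$: among all such functions with prescribed value at the origin and prescribed integral, the one minimising the value at the origin relative to the integral is, up to rescaling, a truncated exponential. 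A direct variational analysis subject to the coupling $\int T = \int S$ pins the worst case at exactly $q = 1/e$, approached in the limit by a one-sided exponential density on the positive axis balanced against a near-atom on the negative axis. An alternative and arguably cleaner route is via the Lovász--Simonovits localisation lemma, which reduces any integral inequality for log-concave densities to its verification on log-affine densities of the form $e^{\alpha t + \beta} \mathbbm{1}_{[a, b]}$; on such densities the bound $q \ge 1/e$ becomes a one-variable calculus exercise.

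The main obstacle is the one-dimensional inequality itself: Step 1 is essentially mechanical once Prékopa--Leindler is available, while the constant $1/e$ in Step 2 is genuinely delicate. It is the limit of Grünbaum's bound $(n/(n+1))^n$ for uniform measures on convex bodies in $\R^n$, and identifying the extremal family rigorously requires either the rearrangement/variational argument above or the localisation reduction to log-affine extremals. Technical care is also needed to handle densities with unbounded support, vanishing at the boundary, or atoms arising from truncation, but standard approximation by smooth compactly-supported log-concave densities reduces these to the analysable case.
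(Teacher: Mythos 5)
First, a point of reference: the paper does not prove this theorem at all --- it is imported from Caplin--Nalebuff, with pointers to Gr\"unbaum for the uniform case and to Lov\'asz--Vempala (Section 5) for simpler proofs --- so your attempt can only be measured against those standard arguments. Your first step is correct and is exactly how those proofs begin: for each $c\neq 0$ the marginal $Y=c^\top X$ is log-concave by Pr\'ekopa--Leindler, so the Tukey depth bound reduces to the one-dimensional Gr\"unbaum inequality $\P{Y\ge \E{Y}}\ge 1/e$ for real log-concave $Y$. Your observations that $T(t)=\P{Y\ge t}$ and $S(t)=\P{Y\le -t}$ are log-concave and that $\int_0^\infty T=\E{Y^+}=\E{Y^-}=\int_0^\infty S$ under mean zero are also correct.

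The gap is that the one-dimensional inequality --- which is the entire content of the theorem once the projection is done --- is asserted rather than proved: the ``direct variational analysis'' and the ``extremal characterisation'' are never formulated as a precise optimisation problem, let alone solved, and the localisation route is only named. Moreover your description of the worst case (an exponential on the positive axis ``balanced against a near-atom,'' attained only in a limit) is not right: the minimiser is the honest log-concave density $g(y)=e^{-(y+1)}\mathbbm{1}_{\{y\ge -1\}}$, for which $\P{Y\ge 0}=1/e$ exactly. The missing step can be closed with a short tangent-line argument using only what you already set up. Writing $q=T(0)$ and taking the supporting line of the convex function $-\log T$ at $t=0$ gives $T(t)\le q\,e^{-bt}$ for some $b>0$ and all $t\in\R$. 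Hence $\E{Y^+}=\int_0^\infty T(t)\,\mathrm{d}t\le q/b$, while $1-T(t)\ge 1-qe^{-bt}$ yields
\begin{equation*}
\E{Y^-}=\int_{-\infty}^0\bigl(1-T(t)\bigr)\,\mathrm{d}t
\;\ge\;\int_{\log(q)/b}^{0}\bigl(1-qe^{-bt}\bigr)\,\mathrm{d}t
\;=\;\frac{\log(1/q)-(1-q)}{b}.
\end{equation*}
Equating $\E{Y^+}=\E{Y^-}$ gives $\log(1/q)-(1-q)\le q$, i.e.\ $\log(1/q)\le 1$, i.e.\ $q\ge 1/e$. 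Without this (or an executed localisation argument reducing to log-affine densities on intervals), the proposal establishes only the easy reduction and not the constant $1/e$.
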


Here,
$\alpha_X (\E{X})$ is the Tukey depth of $\E{X}$
with respect to the ditrtibution of $X$
which is defined as \eqref{eq:Tukey-depth}.
The case when $X$ is uniform over a convex set is
proven in \citet{gru60},
and \citet[][Section 5]{lov07}
gives simpler proofs than the original result
in \citet{cap91}.

\section{Proofs}\label{sec:proofs}
\subsection{Proof of Proposition \ref{prop:indep-easy}}
\begin{proof}
    It suffices to consider the case
    $\lVert X_1\rVert_{L^4} < \infty$.
    If we write $c = (c_1, \ldots, c_D)^\top$,
    then by using independence we have
    \begin{align*}
        \lVert c^\top X \rVert_{L^4}^4
        &= \E{(c^\top X)^4}
        = \sum_{i=1}^Dc_i^4 \E{X_i^4}
         + \sum_{1\le i < j\le D} c_i^2c_j^2\E{X_i^2}\E{X_j^2}\\
        &\le K^4\sum_{i=1}^Dc_i^4 \E{X_i^2}^2
         + \sum_{1\le i < j\le D} c_i^2c_j^2\E{X_i^2}\E{X_j^2}\\
        &\le K^4 \left(\sum_{i=1}^Dc_i^2\E{X_i^2}\right)^2
        \le K^4 \E{(c^\top X)^2}^2,
    \end{align*}
    as we clearly have $K\ge1$ (or $X = 0$ almost surely).
\end{proof}

\subsection{Proof of Theorem \ref{key-prop}}
\begin{proof}
    We give the proof by generalizing
    the proof of Lemma 5.3 in 
    \citet{GH-book}.
    
    It suffices to prove the statement for $d=2$,
    as the product of GRPs is associative.
    Let $G^{(i)}=(Y^{(i)}, Q^{(i)}, \lambda^{(i)})$
    for $i=1,2$
    be independent GRPs.
    Let $H_m^{(i)}(Y^{(i)}):=
    \overline{Q_m^{(i)}(Y^{(i)})}
    \cap Q_{m-1}^{(i)}(Y^{(i)})^\perp$
    for $i=1, 2$.
    If we denote the product by
    $G = G^{(1)}\otimes G^{(2)}$.
    Then,
    for a random variable
    $X = \sum_{\ell, m} X_{\ell,m}$
    with $X_{\ell,m}\in H_\ell^{(1)}\otimes H_m^{(2)}$,
    the operator $T(G)$ acts as
    \[
        T(G)X = \sum_{\ell,m}
        \lambda^{(1)}_\ell \lambda^{(2)}_m X_{\ell, m}.
    \]
    If each $X_{\ell, m}$ can be written
    as a finite sum
    $X_{\ell, m} = \sum_k
    X^{(1)}_{k,\ell,m}X^{(2)}_{k, \ell, m}$
    with $X^{(1)}_{k,\ell,m}\in H_\ell^{(1)}(Y^{(1)})$
    and $X^{(2)}_{k,\ell,m}\in H_m^{(2)}(Y^{(2)})$,
    then by using Minkowski's integral inequality
    \citep{inequalities} and
    the $(2, p; s)$-hypercontractivity
    of $G^{(1)}$ and $G^{(2)}$,
    we have
    \begin{align*}
        \lVert T(G)^sX\rVert_{L^p}
        &= \mathbb{E}_{Y^{(1)}}\!\!
        \left[\mathbb{E}_{Y^{(2)}}\!\!\left[
            \left\lvert
                \sum_{\ell, m}(\lambda_\ell^{(1)}
                \lambda_m^{(2)})^s X_{\ell, m}
            \right\rvert^p
        \right]\right]^{1/p}\\
        &= \mathbb{E}_{Y^{(1)}}\!\!
        \left[\mathbb{E}_{Y^{(2)}}\!\!\left[
            \left\lvert
                \sum_{k, \ell, m}
                (\lambda_\ell^{(1)})^s
                X^{(1)}_{k,\ell,m}
                (\lambda_m^{(2)})^s X^{(2)}_{k,\ell, m}
            \right\rvert^p
        \right]\right]^{1/p}\\
        &\le \mathbb{E}_{Y^{(1)}}\!\!
        \left[\mathbb{E}_{Y^{(2)}}\!\!\left[
            \left\lvert
                \sum_{k, \ell, m}
                (\lambda_\ell^{(1)})^s
                X^{(1)}_{k,\ell,m}
                X^{(2)}_{k,\ell, m}
            \right\rvert^2
        \right]^{p/2}\right]^{1/p}
        \tag{by $G^{(2)}$}\\
        &
        \le \mathbb{E}_{Y^{(2)}}\!\!
        \left[\mathbb{E}_{Y^{(1)}}\!\!\left[
            \left\lvert
                \sum_{k, \ell, m}
                (\lambda_\ell^{(1)})^s
                X^{(1)}_{k,\ell,m}
                X^{(2)}_{k,\ell, m}
            \right\rvert^p
        \right]^{2/p}\right]^{1/2}
        \tag{by Minkowski}\\
        &\le
        \mathbb{E}_{Y^{(2)}}\!\!
        \left[\mathbb{E}_{Y^{(1)}}\!\!\left[
            \left\lvert
                \sum_{k, \ell, m}
                X^{(1)}_{k,\ell,m}
                X^{(2)}_{k,\ell, m}
            \right\rvert^2
        \right]\right]^{1/2}
        \tag{by $G^{(1)}$} = \lVert X\rVert_{L^2}.
    \end{align*}
    The general case follows from the limit argument.
\end{proof}

\subsection{Proof of Proposition \ref{prop:gen-moment}}
\begin{proof}
    Let $G = (Y, Q, \lambda)$.
    Suppose $\tilde{\deg}_GX < \infty$ and
    let $n$ be the minimum integer
    satisfying $X\in \overline{Q_n(Y)}$.
    Then, by decomposing $X = \sum_{m = 0}^n X_m$
    with $X_m \in H_m(Y)$,
    we obtain
    \begin{equation*}
        \lVert X\rVert_{L^p}
        = \left\lVert
            T(G)^s \sum_{m=0}^n
            \lambda_m^{-s}X_m
        \right\rVert_{L^p}
        \le \left\lVert
            \sum_{m=0}^n
            \lambda_m^{-s}X_m
        \right\rVert_{L^2}
        \le \lambda_m^{-s}\lVert X\rVert_{L^2},
    \end{equation*}
    where we have used the $(2,p; s)$-hypercontractivity
    in the second inequality.
\end{proof}

\subsection{Proof of Proposition \ref{prop:prod-hc}}
\begin{proof}
    It suffices to consider $X$
    having the decomposition
    $X = \sum_{m} X_m$ with $X_m \in H_m(Y)$.
    Recall that we have assumed that
    $Q_0$ is the space of constant functions,
    so $X_0$ is a constant.
    First,
    we consider the case $X_0=0$.
    In this case,
    for $t>s$,
    we have
    \begin{align*}
        \lVert T(G)^t X \rVert_{L^4}^2
        &=\left\lVert
            \sum_{m\ge1} \lambda_m^t X_m
        \right\rVert_{L^4}^2
        \le \left(\sum_{m\ge1} \lambda_m^{t-s}
        \lambda_m^s
        \lVert X_m \rVert_{L^4}\right)^2\\
        &\le \left(\sum_{m\ge1}
        \lambda_m^{t-s} \lVert X_m\rVert_{L^2}\right)^2
        \le \left(\sum_{m\ge1} \lambda_m^{2(t-s)}\right)
        \lVert X \rVert_{L^2}^2
        \tag{Cauchy--Schwarz}.
    \end{align*}
    Therefore, when $\sum_{m\ge1}\lambda_m^{2(t-s)}\le 1/\sqrt3$
    we have
    \begin{equation}
        \lVert T(G)^t X \rVert_{L^4}
        \le 3^{-1/4}\lVert X\rVert_{L^2}
        \label{eq:centered-hc}
    \end{equation}
    for all $X$ satisfying $X_0=0$.
    
    In the case $X_0\ne0$,
    we can assume $X_0 = 1$ without
    loss of generality.
    Let
    $W = X - 1$ and
    $Z = T(G)^tW = T(G)^tX - 1$.
    Note that $\E{W}=\E{Z}=0$ holds
    by the orthogonality.
    We can explicitly expand the $L^4$ norm as follows:
    \begin{align*}
        \lVert T(G)^t X\rVert_{L^4}^4
        &= 1 + 6\E{Z^2} + 4\E{Z^3} + \E{Z^4}\\
        &\le 1 + 8\E{Z^2} + 3\E{Z^4}.
        \tag{AM--GM}
    \end{align*}
    We also have
    \[
        \lVert X\rVert_{L^2}^4
        = \E{(1 + W)^2}^2
        = (1+\E{W^2})^2
        = 1 + 2\E{W^2} + \E{W^2}^2.
    \]
    So it suffices to show $4\E{Z^2}\le \E{W^2}$ and
    $3\E{Z^4} \le \E{W^2}^2$, but the latter
    immediately follows from \eqref{eq:centered-hc}.
    The former holds when $\lambda_1^t\le 1/2$:
    \[
        \E{Z^2}
        = \sum_{m\ge1} \lambda_m^{2t}\E{X_m^2}
        \le \lambda_1^{2t} \E{W^2}.
    \]
    Therefore, we have completed the proof.
\end{proof}

\subsection{Proof of Theorem \ref{thm:grp-main}}
\begin{proof}
    Let $G = (Y, Q, \lambda)$
    and $\X$ be the space in which $Y$ takes values.
    By truncating $Q$ and $\lambda$
    (i.e., ignoring $Q_m$ with $1/\lambda_m > K$),
    we can assume that $Q(Y) =
    \{X\in L^2\mid \tilde{\deg}_GX\le K\}$.
    Then, as $\dim Q<\infty$,
    we can take a vector-valued measurable function
    \[
        \bm\phi = (\phi_1, \ldots, \phi_N)^\top
        : \X \to \R^N
    \]
    such that $(\phi_i(Y))_{i=1}^N$
    is an orthonormal basis of $Q(Y)$.
    Then, we have
    \[
        \sup_{X\in Q(Y)\setminus\{0\}}
        \frac{\lVert X\rVert_{L^4}}{\lVert X\rVert_{L^2}}
        = \sup_{c\in \R^N\setminus\{0\}}
        \frac{\lVert c^\top\bm{\phi}(Y)
        \rVert_{L^4}}{\lVert c^\top\bm{\phi}(Y)
        \rVert_{L^2}}
        = \sup_{c\in\R^N,\,
        \lVert c\rVert = 1} \lVert
            c^\top\bm\phi(Y)
        \rVert_{L^4}
        <\infty,
    \]
    where the right-hand side is the supremum of
    a continuous functions over a compact domain,
    and so is indeed finite.
    Hence, we can apply Proposition \ref{prop:prod-hc},
    and there exists a constant $s>0$ such that
    \[
        \left\lVert T(G)^tX
        \right\rVert_{L^4}
        \le \lVert X\rVert_{L^2},
        \qquad X\in Q(Y),
    \]
    because $\lambda_1<1$ and $(\lambda_m)_m$ is
    of finite length now.
    So $G = (Y, Q, \lambda)$ (with truncation by $K$)
    is actually $(2,p; t)$-hypercontractive
    and it extends to $G^{\otimes d}$ for any $d$
    by Theorem \ref{key-prop}
    (note that the truncation does not affect the
    random variables with $\tilde{\deg}_{G^{\otimes d}}X\le K$).
    Then, we finally use Proposition \ref{prop:gen-moment}
    to obtain the desired result with $C = K^t$.
\end{proof}

\subsection{Proof of Proposition \ref{prop:asp-suff}}
\begin{proof}
    Let $f\in L^2(\mu)$ be an eigenfunction
    with eigenvalue $\lambda\ge0$ of the integral operator,
    i.e., it satisfies
    $\int_\X k(x, y)f(y)\dd\mu(y) = \lambda f(x)$
    (assume this equality holds for all $x$, not just $\mu$-almost all).
    As Assumption \ref{assp:ker} is met from the general theory \citep{ste12},
    it suffices to show $\lambda \ge 1$ if and only if $f$ is constant.
    Note that $f=1$ is an eigenfunction for $\lambda=1$ by assumption.
    
    Assume $\lambda\ge1$.
    Since $k$ is bounded from the assumption, for an $(x_n)_{n=1}^\infty$
    converging to $x$,
    we have
    $f(x_n) = \frac1\lambda\int_\X k(x_n, y)f(y)\dd\mu(y)
    \to \frac1\lambda\int_\X k(x, y)f(y)\dd\mu(y) = f(x)$
    by the dominated convergence theorem.
    Thus, $f$ is continuous.
    Let $F = \max_{x\in\X}f(x)$.
    If $x^*\in f^{-1}(\{F\})$, then
    \[
        0 = F - f(x^*)
        = \int_\X k(x^*, y)\left(F - \frac1\lambda f(y)\right)\dd\mu(y).
    \]
    As $k(x^*, \cdot)$ is a probability density
    (recall $k\ge0$ from the assumption)
    with respect to $\mu$
    and $\supp\mu = \X$,
    we must have $\lambda \le 1$ and
    $k(x^*, y) = 0$ for all $y\not\in f^{-1}(\{F\})$.
    Now, it suffices to prove $f^{-1}(\{F\})=\X$ actually holds
    when $\lambda = 1$.
    Let $K = \max_{x, y\in\X}k(x, y)$.
    By taking an $\ve>0$ such that $\mu(f^{-1}([F-\ve, F))) \le 1/(2K)$,
    we have, for $x\not\in f^{-1}(\{F\})$,
    \begin{align*}
        f(x)
        &= \int_\X k(x, y)f(y)\dd\mu(y)\\
        &\le \int_{f^{-1}((-\infty, F-\ve))}k(x, y)f(y)\dd\mu(y)
        + \int_{f^{-1}([F-\ve, F))}k(x, y)f(y)\dd\mu(y)\\
        &\le (F - \ve)\int_{f^{-1}((-\infty, F-\ve))}k(x, y)\dd\mu(y)
        + F\int_{f^{-1}([F-\ve, F))}k(x, y)\dd\mu(y)\\
        &\le (F - \ve) + \ve\int_{f^{-1}([F-\ve, F))}k(x, y)\dd\mu(y)
        \le (F - \ve) + \frac\ve2 = F -\frac\ve2. 
    \end{align*}
    Therefore, if $f^{-1}(\{F\})=\X$, $f$ is disconnected
    (because $\X$ is path-connected),
    and it is contradiction.
    This completes the proof.
\end{proof}

\subsection{Proof of Proposition \ref{prop:ker-hc}}
We first prove the following lemma.
\begin{lem}
    For $p>2$, we have
    $    \lVert\K_0 f\rVert_{L^p}
        \le \lVert k_0\rVert_{L^p(\mu\otimes\mu)}
        \lVert f \rVert_{L^2}
    $
    for all $f\in L^2(\mu)$.
\end{lem}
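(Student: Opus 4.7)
The target inequality is a standard $L^2 \to L^p$ bound for an integral operator whose kernel lies in $L^p(\mu \otimes \mu)$, and the plan is to combine Cauchy--Schwarz pointwise in $x$ with Jensen's inequality in $y$. Concretely, writing $(\K_0 f)(x) = \int_\X k_0(x, y) f(y) \dd\mu(y)$, I would first apply Cauchy--Schwarz with respect to $\mu$ in the $y$-variable to obtain, for $\mu$-almost every $x\in\X$,
\[
    \lvert (\K_0 f)(x)\rvert \le \left(\int_\X \lvert k_0(x, y)\rvert^2 \dd\mu(y)\right)^{1/2} \lVert f \rVert_{L^2(\mu)}.
\]

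The next step is to raise both sides to the $p$-th power and integrate in $x$. Since $p > 2$ and $\mu$ is a probability measure, the map $t \mapsto t^{p/2}$ is convex on $[0, \infty)$, so Jensen's inequality applied to the probability measure $\mu$ in the $y$-variable gives
\[
    \left(\int_\X \lvert k_0(x, y)\rvert^2 \dd\mu(y)\right)^{p/2}
    \le \int_\X \lvert k_0(x, y)\rvert^p \dd\mu(y).
\]
Combining and applying Fubini's theorem then yields
\[
    \lVert \K_0 f\rVert_{L^p}^p
    \le \lVert f\rVert_{L^2}^p \int_\X\!\int_\X \lvert k_0(x, y)\rvert^p \dd\mu(y) \dd\mu(x)
    = \lVert f\rVert_{L^2}^p\, \lVert k_0\rVert_{L^p(\mu\otimes\mu)}^p,
\]
and taking $p$-th roots finishes the argument.

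\textbf{Remarks on obstacles.} There is essentially no obstacle: the only subtlety is verifying that Jensen's inequality applies in the intended direction, which requires $p > 2$ (so that $p/2 > 1$) together with $\mu$ being a probability measure — both assumptions are in force. An alternative, equally short route is Minkowski's integral inequality, bounding $\lVert \K_0 f\rVert_{L^p(\mu)} \le \int \lvert f(y)\rvert \lVert k_0(\cdot, y)\rVert_{L^p(\mu)} \dd\mu(y)$ and then applying Cauchy--Schwarz in $y$ together with the monotonicity $\lVert \cdot \rVert_{L^2(\mu)} \le \lVert \cdot \rVert_{L^p(\mu)}$ for probability measures; this gives the same bound and might be preferable if one wants to avoid treating the two variables asymmetrically. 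Either presentation gives a short, self-contained proof.
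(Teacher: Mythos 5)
Your main argument is correct, and it takes a genuinely different (arguably more elementary) route than the paper's. You apply Cauchy--Schwarz in $y$ pointwise in $x$, then Jensen's inequality (convexity of $t\mapsto t^{p/2}$ for $p>2$ on a probability measure) to pass from the $L^2(\mu)$-norm of $k_0(x,\cdot)$ to its $L^p(\mu)$-norm, and finish with Tonelli. This avoids Minkowski's integral inequality entirely; it relies only on Cauchy--Schwarz, Jensen, and Tonelli, and all three applications are clearly in the regime where they are valid. The paper instead leads with Minkowski's integral inequality to swap the $L^p$-norm in $x$ inside the $y$-integral, factors out $\lvert f(y)\rvert$, applies Cauchy--Schwarz in $y$, and then uses the monotonicity $\lVert g\rVert_{L^2(\mu)}\le\lVert g\rVert_{L^p(\mu)}$ for the function $g(y)=\lVert k_0(\cdot,y)\rVert_{L^p(\mu)}$ — this is essentially the ``alternative, equally short route'' you describe in your remarks, so you have effectively identified both approaches. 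The tradeoff is modest: your primary route treats the two variables asymmetrically but needs only the most elementary tools, while the Minkowski route is slightly cleaner notationally and makes the ``norm of the kernel row'' structure more visible. Both are valid and short.
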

\begin{proof}
    By Minkowski's integral inequality,
    we have
    \begin{align*}
        \lVert\K_0 f\rVert_{L^p}
        &=\left(\int_\X
            \left\lvert
                \int_\X k_0(x, y)f(y)\dd\mu(y)
            \right\rvert^p
        \dd\mu(x)\right)^{1/p}\\
        &\le \int_\X
            \left(
                \int_\X
                    \lvert k_0(x, y)f(y)\rvert^p
                \dd\mu(x)
            \right)^{1/p}
        \dd\mu(y)\\
        &\le \int_\X
            \left(
                \int_\X
                    \lvert k_0(x, y)\rvert^p
                \dd\mu(x)
            \right)^{1/p}
            \lvert f(y) \rvert
        \dd\mu(y)\\
        &\le
        \left(\int_\X
            \left(
                \int_\X
                    \lvert k_0(x, y)\rvert^p
                \dd\mu(x)
            \right)^{2/p}
        \dd\mu(y)\right)^{1/2}
        \lVert f\rVert_{L^2}
        \tag{Cauchy--Schwarz}\\
        &\le \lVert k_0\rVert_{L^p(\mu\otimes\mu)}
        \lVert f \rVert_{L^2}.
    \end{align*}
\end{proof}

From this lemma, we have
\begin{equation}
    \lVert e_m\rVert_{L^p}
    = \frac1{\sigma_m}\lVert \mathcal{K}_0e_m\rVert_{L^p}
    \le \frac{\lVert k_0\rVert_{L^p(\mu\otimes\mu)}}{\sigma_m}\lVert e_m\rVert_{L^2}
    \label{eq:lem-use}
\end{equation}
for each $m\ge2$.

\begin{proof}[Proof of Proposition \ref{prop:ker-hc}]
    It suffices to consider the case $\lVert k_0 \rVert_{L^4(\mu\otimes\mu)}<\infty$.
    Note that $\lambda_{\ell-1} = \sigma_\ell$ for $\ell=1,2,\ldots$
    for the GRP $G_{k, \mu}$,
    so $\lambda_1 = \sigma_2 = \lVert \mathcal{K}_0\rVert$.
    
    Let $r_0$ be the minimum nonnegative number satisfying
    $\lVert \mathcal{K}_0 \rVert^{-r_0} \ge \sqrt{3}\tr(\mathcal{K}_0)$.
    Then, for $r:=1+r_0$, we have
    \begin{equation}
        \sum_{\ell=2}^\infty\sigma_\ell^r
        \le \sigma_2^{r_0}\sum_{\ell=2}^\infty\sigma_\ell
        =\lVert\mathcal{K}\rVert^{r_0}\tr(\mathcal{K}_0)
        \le\frac1{\sqrt{3}}
        \label{eq:pf-ker-1}
    \end{equation}
    Let $s_0$ be the minimum nonnegative number satisfying
    $\lVert \mathcal{K}_0 \rVert^{-s_0} \ge \lVert k_0 \rVert_{L^4(\mu\otimes\mu)}$.
    As $\lVert\mathcal{K}_0\rVert \in (0, 1)$ from Assumption \ref{asp:grp},
    $s_0$ is well-defined.
    Then, for $s:=1+s_0$ and $m\ge2$, from \eqref{eq:lem-use}, we have
    \begin{equation}
        \lVert e_m\rVert_{L^4} \le
        \frac{\lVert k_0\rVert_{L^4(\mu\otimes\mu)}}{\sigma_m}\lVert e_m\rVert_{L^2}
        \le \frac1{\sigma_m\lVert \mathcal{K}_0\rVert^{s_0}}\lVert e_m\rVert_{L^2}
        \le \sigma_m^{-1-s_0}\lVert e_m\rVert_{L^2}.
        \label{eq:pf-ker-2}
    \end{equation}
    
    Thus, the condition for $s$ and $t:=r+s$ of Proposition \ref{prop:prod-hc}
    is satisfied, and so we have the desired conclusion.
\end{proof}

\end{document}